\title{What are $E_{\infty}$ ring spaces good for?}
\author{J\,P May}
\address{Department of Mathematics\\
The University of Chicago\\
Chicago, Illinois 60637}
\email{may@math.uchicago.edu}
\urladdr{http://www.math.uchicago.edu/~may}
\newtheorem{thm}{Theorem}[section]
\newtheorem{cor}[thm]{Corollary}
\newtheorem{prop}[thm]{Proposition}
\newtheorem{lem}[thm]{Lemma}
\newtheorem{quest}[thm]{Question}
\theoremstyle{definition}
\newtheorem{defn}[thm]{Definition}
\newtheorem{exmp}[thm]{Example}
\theoremstyle{remark}
\newtheorem{rem}[thm]{Remark}
\let\c@equation\c@thm
\numberwithin{equation}{section}
\DeclareFontFamily{OMS}{rsfs}{\skewchar\font'60}
\DeclareFontShape{OMS}{rsfs}{m}{n}{<-5>rsfs5 <5-7>rsfs7 <7->rsfs10 }{}
\DeclareSymbolFont{rsfs}{OMS}{rsfs}{m}{n}
\DeclareSymbolFontAlphabet{\scr}{rsfs}
\let\overto\xrightarrow
\newcommand{\sA}{\scr{A}}
\newcommand{\sC}{\scr{C}}
\newcommand{\sD}{\scr{D}}
\newcommand{\sE}{\scr{E}}
\newcommand{\sF}{\scr{F}}
\newcommand{\sG}{\scr{G}}
\newcommand{\sI}{\scr{I}}
\newcommand{\sL}{\scr{L}}
\newcommand{\sN}{\scr{N}}
\newcommand{\sO}{\scr{O}}
\newcommand{\sT}{\scr{T}}
\newcommand{\sU}{\scr{U}}
\newcommand{\bC}{\mathbb{C}}
\newcommand{\bE}{\mathbb{E}}
\newcommand{\bF}{\mathbb{F}}
\newcommand{\bM}{\mathbb{M}}
\newcommand{\bN}{\mathbb{N}}
\newcommand{\bR}{\mathbb{R}}
\newcommand{\bZ}{\mathbb{Z}}
\newcommand{\al}{\alpha}
\newcommand{\be}{\beta}
\newcommand{\de}{\delta}
\newcommand{\epz}{\varepsilon}
\newcommand{\ph}{\phi}
\newcommand{\et}{\eta}
\newcommand{\io}{\iota}
\newcommand{\ka}{\kappa}
\newcommand{\la}{\lambda}
\newcommand{\tha}{\theta}
\newcommand{\si}{\sigma}
\newcommand{\ps}{\psi}
\newcommand{\ze}{\zeta}
\newcommand{\om}{\omega}
\newcommand{\GA}{\Gamma}
\newcommand{\DE}{\Delta}
\newcommand{\SI}{\Sigma}
\newcommand{\OM}{\Omega}
\newcommand{\com}{\circ}     % composition of functions
\newcommand{\iso}{\cong}     % preferred isomorphism symbol
\newcommand{\htp}{\simeq}    % homotopy symbol
\newcommand{\ten}{\otimes}   % tensor product
\newcommand{\sma}{\wedge}    % smash product
\newcommand{\rtarr}{\longrightarrow}
\def\quickop#1{\expandafter\newcommand\csname #1\endcsname{\operatorname{#1}}}
\begin{document}

\begin{abstract} Infinite loop space theory, both additive and multiplicative,
arose largely from two basic motivations.  One was to solve calculational questions 
in geometric topology.  The other was to better understand algebraic $K$-theory.
The Adams conjecture is intrinsic to the first motivation, and Quillen's
proof of that led directly to his original, calculationally accessible,
definition of algebraic $K$-theory.   In turn, the infinite loop understanding
of algebraic $K$-theory feeds back into the calculational questions in
geometric topology.  For example, use of infinite loop space theory leads to a 
method for determining the characteristic classes for topological bundles (at odd
primes) in terms of the cohomology of finite groups.   We explain just a little about 
how all that works, focusing on the central role played by $E_{\infty}$ ring spaces.
\end{abstract}

\maketitle

\tableofcontents

\section*{Introduction}

We review and modernize a few of the 1970's applications of $E_{\infty}$ ring
spaces.  We focus on results that involve orientation theory on the infinite
loop space level and on results that involve applications of the $E_{\infty}$ 
ring spaces of algebraic $K$-theory to the analysis of spaces that appear in 
geometric topology.  These $E_{\infty}$ ring spaces arise from bipermutative 
categories. 

Before turning to our main theme, we recall some results of \cite{Class} 
and \cite{MQR} on the classification of bundles and fibrations with additional 
global structure in \S\ref{class}.  We are especially interested in the
classification of bundles and fibrations with a an $R$-orientation for 
some ring spectrum $R$, and use of the (LMS) spectra of \cite{LMS} is the
key to the construction of such classifying spaces.  

We explain how the unit 
$E_{\infty}$ spaces $GL_1R$ and $SL_1R$ of an $E_{\infty}$ ring spectrum $R$ 
relate to the theory of $R$-orientations of bundles and fibrations in \S2.  This use 
of the infinite loop spaces $GL_1R$ was a central theme in the applications of \cite{MQR}, 
where it was crucial to the study of the structure of many spaces of geometric 
interest and to the calculation of their homology and cohomology \cite{CLM}. 
It also provides the foundational starting point for much recent work. 

From the current perspective, \cite{MQR} focused on chromatic level one phenomena and 
their relationship to space level structure, in particular topological bundle theory, 
while recent work focuses on chromatic level two phenomena in stable homotopy theory. 
We illustrate these contrasting points of view in \S\ref{units2} and \S\ref{ThomThom}.  
There are both space level and spectrum level notions of an (infinite loop) $R$-orientation
of a bundle theory, as opposed to an orientation of an individual bundle.
In \S\ref{units2}, we describe universal orientations in terms of the classifying $E_{\infty}$ 
space for $R$-oriented stable bundles and its relationship to other relevant $E_{\infty}$ 
spaces.  In \S\ref{ThomThom}, we reinterpret the theory of universal orientations in terms of
certain $E_{\infty}$ ring Thom spectra $M(G;Y)$ that we construct in \S\ref{Thom}. Geometric 
applications focus on the space 
level theory.  Applications in stable homotopy theory focus on the spectrum level theory.

We illustrate another pair of contrasting points of view implicit in \S1 and \S2 by 
considering the notational tautologies
\[  F = GL_1 S \ \ \ \text{and} \ \ \ SF = SL_1S. \]
Here $F$ is the topological monoid of stable self-homotopy equivalences of spheres and
$SF$ is its submonoid of degree $1$ self-equivalences, while $GL_1S$ and $SL_1S$ are the
unit subspace $Q_1S^0\cup Q_{-1}S^0$ and degree $1$ unit subspace $Q_1S^0$ of the zero$^{th}$ 
space $QS^0 = \colim \OM^nS^n$ of the sphere
spectrum $S$.   The displayed equalities really are tautological, even as $\sL$-spaces and 
thus as infinite loop spaces, where $\sL$ is the linear isometries operad.  Nevertheless, 
we think of the two sides of this tautology very differently.  We claim that $F$ should be
thought of as ``additive'' while $GL_1S$ should be thought of as ``multiplicative''.

The infinite orthogonal group $O$ is a sub-monoid of $F$; we denote the inclusion by $j\colon O\rtarr F$.
On passage to classifying 
spaces we obtain a map of $\sL$-spaces $Bj\colon BO\rtarr BF$. The underlying $H$-space structures on $BO$ 
and $BF$ represent the Whitney sum of vector bundles and the fiberwise smash product of spherical 
fibrations; fiberwise one-point compactification of bundles sends the first to the second.  
The map $Bj$ represents the $J$-homomorphism, and it should be thought of 
as an infinite loop map $BO_{\oplus}\rtarr BF$ since it is the Whitney sum of bundles that gives
rise to the relevant $H$-space structure on $BO$.  Therefore $F$ and $BF$ should be thought of as ``additive''.  

On the other hand, the unit $e\colon S\rtarr R$ of an $E_{\infty}$ ring spectrum $R$ gives a map
of $\sL$-spaces and thus an infinite loop map $GL_1S \rtarr GL_1R$. Here we are thinking
of units of rings under multiplication, and $GL_1 S$ should be thought of as ``multiplicative''.
For example, if we take $R=KO$, then $SL_1R$ is $BO_{\otimes}$; the relevant $H$-space structure
on $BO$ represents the tensor product of vector bundles.  The additive and multiplicative  
$\sL$-space structures on $BO_{\oplus}$ and $BO_{\otimes}$ are quite different and definitely inequivalent;
$BO_{\otimes}$ splits as $BO(1)\times BSO_{\otimes}$, but $BO_{\oplus}$ does not split.  It 
is a deep theorem of Adams and Priddy \cite{AP} that $BSO_{\oplus}$ and $BSO_{\ten}$ are 
actually equivalent as infinite loop spaces, but not by any obvious map and not for any obvious reason.
The analogous statements hold with $O$ and $SO$ replaced by $U$ and $SU$.   

Note that we now have infinite loop maps $SO\rtarr SF=SL_1S\rtarr BO_{\otimes}$.  It turns
out that, after localizing at an odd prime $p$, there are infinite loop spaces $J_{\oplus}$ and 
$J_{\otimes}$ whose homotopy groups are the image of $J$ and there is a diagram of infinite loop maps 
\[ \xymatrix{  
SO \ar[dd] \ar[rd] & & J_{\otimes} \ar[dd]\\
& SF \ar[ur] \ar[dr] & \\
J_{\oplus} \ar[ur] & & BO_{\otimes} \\}\]
such that the composite $J_{\oplus}\rtarr J_{\otimes}$ is an ``exponential'' equivalence of 
infinite loop spaces. This implies that $SF$ splits as an infinite loop space as the product
$J_{\otimes}\times Coker\, J$, where $Coker\, J$ is the fiber of the map $SF\rtarr J_{\otimes}$.
This and related splittings play a fundamental role in calculations in geometric topology,
for example in determining the characteristic classes for stable topological bundles.  

We shall give an outline sketch of how this goes, but without saying anything
about the actual calculations.  Those center around the additive and multiplicative
Dyer-Lashof operations in mod $p$ homology that are induced from the additive and multiplicative $E_{\infty}$ 
structures of $E_{\infty}$ ring spaces.  The distributivity law relating these $E_{\infty}$ 
structures leads to mixed Cartan formulas and mixed Adem relations 
relating these two kinds of operations, and there are Nishida relations relating Steenrod operations
and Dyer-Lashof operations.  Use of such algebraic structure is the only known route for 
understanding the characteristic classes of spherical fibrations and, at odd primes, topological bundles.
It is worth remarking that the analogous structures on generalized homology theories have hardly been studied.

The previous paragraphs concern problems arising from geometric topology.  To explain
the exponential splitting and other key facets of the analysis, we must switch gears and
consider the $E_{\infty}$ ring spaces of algebraic $K$-theory that arise from bipermutative 
categories.  Reversing Quillen's original direction of application, we will thus be considering some 
applications of algebraic $K$-theory to geometric topology. We describe the relevant examples of bipermutative categories and maps between them in \S\ref{exmps}.  The fundamental tool used by
Quillen to relate topological $K$-theory to algebraic $K$-theory is Brauer lifting, and we 
explain the analysis of Brauer lifting on the infinite loop space level in \S\ref{Brauer}.  We 
relate the $K$-theory of finite fields to orientation theory and infinite loop
splittings of geometrically important spaces in \S\ref{last}. 

We hope that this review of just a bit of how $E_{\infty}$ ring theory plays out 
at chromatic level one might help people work out analogous and deeper results 
at higher chromatic levels.  We raise a concrete question related to this. There 
is a mysterious ``rogue object'' (Adams' term \cite[p. 193]{Adams}) that pervades 
the chromatic level one work, namely the infinite loop space $Coker\, J$ mentioned
above. Its first delooping $BCoker\, J$ has a 
natural bundle theoretic interpretation as the classifying space for $j$-oriented 
spherical fibrations, as we shall see in \S\ref{last}, and it is the fundamental 
error term that encodes all chromatic levels greater than one in one indigestible lump.  

As Adams wrote ``to this space or spectrum we consign all of the unsolved problems
of homotopy theory''.  This object seems to be of fundamental interest, but 
it seems to have been largely forgotten.  I'll take the opportunity to explain 
what it is and how it fits into the picture as we understood it in the 1970's. 
As far as I know, we know little more about it now than we did then.  It is natural
to ask the following question.  

\begin{quest}  How precisely does $BCoker\,J$ relate to the chromatic filtration of stable 
homotopy theory?\footnote{Actually, while almost nothing was known about this question when I asked it in the first 
draft of this paper, Nick Kuhn and Justin Noel have obtained a very interesting answer in just the 
last few weeks, that is, in February, 2009.}
\end{quest}

\vspace{2mm}

It is a pleasure to thank Andrew Blumberg for catching errors, suggesting improvements,
and implicitly suggesting that this paper be split off from its prequels \cite{Prequel1, Prequel2}
in this volume.  Andy Baker,
Birgit Richter, and John Lind also caught obscurities and suggested improvements.
Andy and Birgit deserve thanks or complaints for having allowed me to go on at such length.

\section{The classification of oriented bundles and fibrations}\label{class}

For a topological 
monoid $G$, a right $G$-space $Y$, and a left $G$-space $X$, we have the 
two-sided bar construction $B(Y,G,X)$.  It is the geometric realization of
the evident simplicial space with $q$-simplices $Y\times G^q\times X$. We 
fix notations for some maps between bar constructions that we will use 
consistently.
The product on $G$ and its actions on $Y$ and $X$ induce a natural map
\begin{equation}\label{epz} \epz\colon B(Y,G,X)\rtarr Y\times_G X. \end{equation}
The maps $X\rtarr *$ 
and $Y\rtarr *$ induce natural maps
\begin{equation}\label{pq}  p\colon B(Y,G,X) \rtarr B(Y,G,*) \ \ \ 
\text{and}\ \ \ q\colon B(Y,G,X)\rtarr B(*,G,X). \end{equation}
The identifications $Y=Y\times\{\ast\}$ and $X = \{\ast\}\times X$ induce natural maps 
\begin{equation}\label{tq} t\colon Y\rtarr B(Y,G,*) \ \ \text{and} \ \ u\colon X\rtarr B(*,G,X).  
\end{equation}

We let $EG=B(*,G,G)$, which is a free right $G$-space, and $BG = B(*,G,*)$.  We assume 
that the identity element $e\in G$ is a nondegenerate basepoint. We can always arrange 
this by growing a whisker from $e$, but this will only give a monoid 
even when $G$ is a group.  We also assume that $G$ is grouplike, meaning 
that $\pi_0(G)$ is a group under the induced product.  When $G$ is a group,
it is convenient to assume further that $G$ acts effectively on $X$.  Recall
that for any such $X$ the associated principal bundle functor and the functor that sends a
principal $G$-bundle $P$ to $P\times_G X$ give a natural bijection between
the set of equivalence classes of principal $G$-bundles and the set of
equivalence classes of $G$-bundles with fiber $X$.

We recall from \cite{Class} how the bar construction is used to classify bundles and fibrations. 
To begin with, the following diagram is a pullback even when $G$ is just a monoid.
\[ \xymatrix{
 B(Y,G,X) \ar[r]^-{q} \ar[d]_{p} & B(*,G,X) \ar[d]^{p} \\
B(Y,G,*) \ar[r]_-{q} & BG \\} \]

When $G$ is a topological group, $p\colon EG\rtarr BG$ is a 
(numerable) universal principal $G$-bundle.  In fact, $EG$ is also a topological group, with
$G$ as a closed subgroup, and $BG$ is the homogeneous space $EG/G$ of right cosets.   The map $p\colon B(*,G,X)\rtarr BG$ is the associated
universal $G$-bundle with fiber $X$.  The map $p\colon B(Y,G,X)\rtarr B(Y,G,*)$ is a $G$-bundle
with fiber $X$, and it is classified by $q\colon B(Y,G,*)\rtarr BG$.  If $G$ acts principally on $Y$ and effectively on $X$, then the following diagram is a pullback in which the maps $\epz$ are (weak) equivalences.
\[ \xymatrix{
 B(Y,G,X) \ar[r]^-{\epz} \ar[d]_{p} & Y\times_GX \ar[d]^{p} \\
B(Y,G,*) \ar[r]_-{\epz} & Y/G \\} \]
The classification theorem for bundles states that for any $X$ the set $[A,BG]$ of (unbased) homotopy 
classes of maps $A\rtarr BG$ is naturally isomorphic to the set of equivalence classes of $G$-bundles
with fiber $X$ over $A$ when $A$ has the homotopy type of a CW complex.  Pullback of the 
universal bundle gives the map in one direction.  In the other direction, for a principal $G$-bundle 
$Y\rtarr A$, the two pullback squares above combine to give the classifying map 
\[ \xymatrix@1{ A\iso Y/G \ar[r]^-{\epz^{-1}} &  B(Y,G,*) \ar[r]^-{q} & BG,\\}  \]
where $\epz^{-1}$ is any chosen (right) homotopy inverse to $\epz$.  See \cite[\S\S8,9]{Class} for details and proofs. However, we point out for later reference one fact that drops out of the proof. Consider the diagram
\begin{equation}\label{later}
\xymatrix@1{
BG & B(EG,G,*) \ar[l]_-{\epz} \ar[r]^-{q} & BG. \\}
\end{equation}
For any chosen homotopy inverse $\epz^{-1}$, $q\com \epz^{-1}$ is homotopic to the identity.

When $G$ is only a monoid, one has to develop a theory of principal and associated fibrations.
Also, the maps in our first pullback diagram are then only quasifibrations, and we have to replace
quasifibrations by fibrations since pullbacks of quasifibrations need not be quasifibrations.
Once these details are taken care of, the classification of fibrations works in the same way
as the classification of bundles. Taking $G$ to be the monoid $F(X) = hAut(X)$ of based self 
homotopy equivalences of a based CW complex $X$, $BF(X)$ classifies well-sectioned (the section
is a fiberwise cofibration) fibrations with fiber $X$.  Letting $SF(X)$ be the submonoid of 
self-maps homotopic to the identity and
defining orientations appropriately, $BSF(X)$ classifies oriented well-sectioned fibrations 
with fiber $X$.  See \cite[\S9]{Class} and \cite[\S1]{Fib} for details and proofs.  

We are interested in the role of $Y$ in the constructions above.  We have already exploited the 
variable $Y$ in our sketch proof of the classification theorem, 
but it has other uses that are of more direct interest to us here.  A general theory of 
$Y$-structures on bundles and fibrations is 
given in \cite[\S10]{Class}.  For
simple examples, consider a map $f\colon H\rtarr G$ of topological monoids.
In the generality of monoids, it is sensible, although non-standard, to define 
\begin{equation}\label{ersatz} 
G/H = B(G,H,*) \ \ \ \text{and} \ \ \ H\backslash G = B(*,H,G);
\end{equation}
these are consistent up to homotopy with the usual notions when $H$ and $G$ are groups. 
With our assumption that $H$ and $G$ are grouplike, both of these are equivalent to the fiber 
of $Bf\colon BH\rtarr BG$.  As explained in \cite[10.3, 10.4]{Class}, with the notations of
(\ref{ersatz}), the theory of 
$Y$-structures specializes to show that $G/H$ classifies $H$-fibrations 
with a trivialization as a $G$-fibration and $B(H\backslash G,G,*)$ classifies 
$G$-fibrations with a ``reduction of the structural monoid'' from $G$ to $H$.

However, our main interest is in the specialization of the theory of $Y$-structures
to the classification of oriented fibrations and bundles that is explained in \cite[Ch. III]{MQR},
where more details may be found. 

Recall the language of functors with cartesian product (FCP's) from \cite[\S\S2, 12]{Prequel1}.
As there, we understand FCP's and FSP's to be commutative in this paper.  We are concerned 
specifically with the monoid-valued $\sI$-FCP's and $\sI_c$-FCP's of \cite[\S2]{Prequel1}.  Of course, group-valued $\sI$-FCP's and $\sI_c$-FCP's are defined similarly.
Remember that the categories of monoid or group-valued $\sI$-FCP's and $\sI_c$-FCP's 
are equivalent. 

For finite dimensional inner product spaces $V$, let $F(V) = F(S^V)$ and 
$SF(V) = SF(S^V)$.  For countably infinite dimensional $U$, $F(U)$ 
and $SF(U)$ are defined by passage to colimits over the inclusions $V\subset W$ of 
finite dimensional subspaces of $U$.  These are $\sI_c$-FCP's via smash products of 
maps of spheres, and they are monoid-valued under composition. To avoid ambiguity, 
it would be sensible to write 
$F$ and $SF$ only for these monoid-valued FCP's 
and to write $GL_1(S)$ and $SL_1(S)$ for their values on $\bR^{\infty}$, but we shall allow the 
alternative notations $F = F(\bR^{\infty})$ and $SF=SF(\bR^{\infty})$, as in the introduction.  
{\em We agree to use the notations 
$F$ and $SF$ when we are 
thinking about the roles of these spaces in space level applications and the notations
$GL_1(S)$ and $SL_1(S)$ when we are thinking about their role in stable homotopy theory.}
One point is that it is quite irrelevent to the space level applications that the spaces
$F$ and $SF$ happen to be components of the zero$^{th}$ space of the sphere spectrum.

Throughout the rest of this section and the following three sections, we let $G$ be a monoid-valued 
$\sI_c$-FCP together with a map $j\colon G\rtarr F$ of monoid-valued $\sI_c$-FCP's.  We  
assume that $G$ is grouplike, meaning that each $\pi_0(G(V))$ is a group.  The letter $j$
is a reminder of the $J$-homomorphism, which it induces when $G=O$. With a little interpretation 
(such as using complex rather than real inner product spaces) examples
include $O$, $SO$, $Spin$, $String$, $U$, $SU$, $Sp$, $Top$, and $STop$.  We also 
write $G$ for $G(\bR^{\infty})$, despite the ambiguity, and we agree to write $SG$ for the component 
of the identity of $G$ even when $G$ is connected and thus $SG=G$. The classifying spaces
$BG(V)$ give an $\sI_c$-FCP, but of course it is not monoid-valued. More generally, there 
is an evident notion of a (left or right) action of a monoid-valued functor $\sI_c$-FCP
on an $\sI_c$-FCP. The following observation \cite[II.2.2]{MQR} holds by the product-preserving 
nature of the two-sided bar construction.

\begin{prop}\label{twoside1} If $G$ is a monoid-valued $\sI_c$-FCP that acts from the right 
and left on $\sI_c$-FCP's $Y$ and $X$, then the functor $B(Y,G,X)$ 
specified by
\[  B(Y,G,X)(V) = B(Y(V),G(V),X(V)) \]
inherits a structure of an $\sI_c$-FCP from $G$, $Y$, and $X$. 
\end{prop}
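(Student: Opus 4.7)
The plan is to produce the $\sI_c$-FCP structure on $B(Y,G,X)$ by first building it at the simplicial level, where only products and the given structure maps of $G$, $Y$, and $X$ are involved, and then applying geometric realization, using the fact that realization commutes with finite products in our category of spaces. At the $q$-th simplicial level we have the functor $V \mapsto Y(V)\times G(V)^{q}\times X(V)$, and on this we may define a pairing
\[
\om_q\colon (Y(V)\times G(V)^{q}\times X(V))\times(Y(W)\times G(W)^{q}\times X(W))\rtarr Y(V\oplus W)\times G(V\oplus W)^{q}\times X(V\oplus W)
\]
by shuffling the factors into the order $Y(V)\times Y(W)\times G(V)\times G(W)\times\cdots\times X(V)\times X(W)$ and applying the given FCP pairings of $Y$, $G$ (repeatedly), and $X$ coordinatewise. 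The resulting $\om_q$ is natural in $\sI_c\times\sI_c$ because each constituent pairing is.

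Next I would check simplicial compatibility: the face maps $\pa_i$ are induced either by the monoid product $G\times G\rtarr G$ or by the actions $Y\times G\rtarr Y$ and $G\times X\rtarr X$, while the degeneracies are induced by inserting the unit $\ast\rtarr G$. The hypotheses that $G$ is monoid-valued and that $Y$ and $X$ carry $G$-actions in the category of $\sI_c$-FCP's amount to the assertion that $\mu$, the action maps, and the unit are all morphisms of $\sI_c$-FCP's, so each $\pa_i$ and $s_j$ commutes with the $\om_q$. This shows that $\om_{\bullet}$ is a map of simplicial spaces, where the target is obtained from the simplicial space $B(Y,G,X)(V\oplus W)$ by reindexing along the diagonal of $\sI_c\times\sI_c$.

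Now I apply geometric realization. Since realization of simplicial spaces in our convenient category of spaces commutes with finite cartesian products, $|\om_{\bullet}|$ gives a natural map
\[
\om\colon B(Y,G,X)(V)\times B(Y,G,X)(W)\rtarr B(Y,G,X)(V\oplus W),
\]
natural in $(V,W)\in\sI_c\times\sI_c$. The associativity, commutativity, and unitality axioms for this $\om$ follow from the corresponding axioms for the $\om_q$, which in turn are immediate from the coherence of the given FCP structures on $Y$, $G$, $X$ and the symmetry of $\oplus$ on $\sI_c$. Continuity on morphisms of $\sI_c$ reduces similarly to the continuity properties of the structure maps of the three input functors.

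The only delicate point is the commutation of geometric realization with cartesian products, and the compatibility of this commutation with the natural transformations involved; this is a standard fact in compactly generated spaces, and it is what allows the simplicial pairings $\om_q$ to assemble into the desired pairing $\om$. Everything else is bookkeeping that follows formally from the hypothesis that $G$ acts on $Y$ and $X$ through maps of $\sI_c$-FCP's.
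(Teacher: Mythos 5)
Your argument is correct and is exactly the one the paper intends: the paper disposes of this proposition in one line by appealing to ``the product-preserving nature of the two-sided bar construction'' (citing \cite[II.2.2]{MQR}), and your level-wise construction of the pairings $\om_q$, the check against faces and degeneracies via the hypothesis that the product, unit, and actions are FCP maps, and the passage through realization's commutation with finite products in compactly generated spaces is precisely that argument written out in full.
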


We can think of $G$-bundles or $G$-fibrations, which by abuse we call $G$-bundles
in what follows, as $F$-fibrations with a reduction of their 
structural monoids to $G$.  Here we are thinking of finite dimensional inner product spaces,
and we understand the fibers of these bundles to be spheres $S^V$.  The maps on classifying spaces induced 
by the product maps $G(V)\times G(W)\rtarr G(V \oplus W)$ of the FCP are covered by maps
$Sph(V)\sma Sph(W)\rtarr Sph(V\oplus W)$ of universal spherical bundles.  The
whole structure in sight forms a PFSP (parametrized functor with smash product), as
specified in \cite[Ch. 23]{MS}.  That point of view best captures the relationships
among FCP's, FSP's, and Thom spectra, but we shall not go into that here.

Now recall that $GL_1R$ is the space of unit components of the $0^{th}$ space $R_0$ of a 
commutative ring spectrum $R$ and that $SL_1R$ is the component of the identity. The
space $GL_1R$ has a right action by the monoid $F$.\footnote{The notation $FR$ for
$GL_1R$ originally used in \cite{MQR} emphasizes this relationship to $F$.} This is a 
trivial observation, but a very convenient one that is not available with other definitions 
of spectra.  Indeed, $R_0$ is homeomorphic to $\OM^V R(V)$ and, since $F(V) = F(S^V)$, composition 
of maps gives a right action of $F(V)$ on $\OM^V R(V)$.  When $R$ is an up-to-homotopy commutative 
ring spectrum, this action restricts to an action of $F(V)$ on $GL_1R$ and of $SF(V)$ on 
$SL_1R$.  These actions are compatible with colimits and therefore induce a right action
of the monoid $F$ on the space $GL_1R$ and of $SF$ on $SL_1R$. These actions pull back 
to actions by the monoids $G$ and $SG$.  

An $R$-orientation of a well-sectioned bundle $E\rtarr B$ with fiber $S^V$ is a cohomology class 
of its Thom space $E/B$ that restricts to a unit on fibers.  Such a class is represented by a 
map $E/B\rtarr R(V)$.  Taking $B$ to be connected, a single fiber will do, and then the 
restriction is a based map $S^V\rtarr R(V)$ and thus a based map $S^0\rtarr \OM^VR(V)\iso R_0$.  
The image of $1$ must be a point of $GL_1(R)$.  This should give a hint as to why the following 
result from \cite[I\S2]{MQR} is plausible.

\begin{thm} The space $B(GL_1R, G(V), *)$ classifies equivalence classes of $R$-oriented
$G(V)$-bundles with fiber $S^V$.
\end{thm}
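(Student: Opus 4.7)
The plan is to apply the general theory of $Y$-structures on bundles from \cite[\S 10]{Class}, taking $G = G(V)$ and $Y = GL_1R$ with the right $G(V)$-action obtained by composing $j\colon G(V)\rtarr F(V)$ with the right action of $F(V)$ on $GL_1R$ by precomposition with self maps of $S^V$, as recalled just before the statement. Since $BG(V)=B(*,G(V),*)$ already classifies $G(V)$-bundles with fiber $S^V$, the $Y$-structure theory applied to the forgetful map
\[ p\colon B(GL_1R, G(V), *) \rtarr BG(V) \]
will identify $B(GL_1R, G(V), *)$ as the classifying space for pairs consisting of a $G(V)$-bundle together with a $GL_1R$-structure, where a $GL_1R$-structure on a principal bundle $P\rtarr A$ means a section of the associated fiber bundle $P\times_{G(V)}GL_1R\rtarr A$.

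The substance of the argument is then to identify such sections with $R$-orientations of the associated sphere bundle $E=P\times_{G(V)}S^V\rtarr A$. A section of $P\times_{G(V)}GL_1R\rtarr A$ is the same datum as a $G(V)$-equivariant map $\phi\colon P\rtarr GL_1R\subset \OM^V R(V)$. Using the definition of the $G(V)$-action as precomposition of loops with self maps of $S^V$, the adjoint of $\phi$ is a fiberwise based map of sphere bundles $\tilde{\phi}\colon E\rtarr A\times R(V)$, and collapsing the section produces a map $E/A\rtarr R(V)$, that is, a cohomology class $u\in \tilde{R}^V(E/A)$. The condition that $\phi$ lands in the unit components $GL_1R$ (rather than all of $R_0\iso \OM^V R(V)$) translates, on each fiber, exactly to the statement that the restriction $S^V\rtarr R(V)$ represents a unit in $\tilde{R}^0(S^0)$. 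This is the $R$-orientation condition of the paragraph preceding the theorem.

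The main obstacle is the bookkeeping: one must check that the dictionary between equivariant maps $P\rtarr GL_1R$, fiberwise based maps of the sphere bundle, Thom classes, and their unit-restriction conditions is natural in $A$ and compatible with the quasifibration-to-fibration replacements and the equivalences $\epz$ appearing in the classification diagrams such as (\ref{later}). Once that dictionary is in hand, the classification theorem for $Y$-structures transports the natural bijection between $[A,B(GL_1R,G(V),*)]$ and homotopy classes of lifts of the classifying map $f\colon A\rtarr BG(V)$ through $p$ into the asserted natural bijection with equivalence classes of $R$-oriented $G(V)$-bundles with fiber $S^V$ over $A$.
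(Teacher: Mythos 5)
Your proposal follows essentially the same route as the paper's source for this theorem (\cite[Ch.\ III]{MQR}): apply the $Y$-structure classification theory of \cite[\S 10]{Class} with $Y=GL_1R$ viewed as a right $G(V)$-space through $F(V)$, and then translate $GL_1R$-structures into orientations via the adjunction between $G(V)$-maps $P\rtarr GL_1R\subset \OM^V R(V)$ and Thom-space maps $E/A\rtarr R(V)$ whose restrictions to fibers are units. The one point you pass over that the paper explicitly flags afterward is the normalization of orientations against the prescribed $\pi_0(R)$-orientation, but that is an interpretive refinement rather than a gap in the argument.
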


\begin{cor} The space $B(SL_1R, SG(V), *)$ classifies $R$-oriented $SG(V)$-bun\-dles with
fiber $S^V$.  
\end{cor}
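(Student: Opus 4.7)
The plan is to deduce the corollary from the theorem by restricting to identity components on both sides. I would first observe that $SG$ is itself a grouplike monoid-valued $\sI_c$-FCP, equipped with the restriction $j|_{SG}\colon SG\rtarr F$ (in fact factoring through $SF$). Applying the theorem with $G$ replaced by $SG$ therefore yields that $B(GL_1R, SG(V), *)$ classifies $R$-oriented $SG(V)$-bundles with fiber $S^V$; the remaining task is to extract the sub-classification $B(SL_1R, SG(V), *)$.

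To do this I would exploit the connectivity of $SG(V)$. Since $SG(V)$ is the identity component of $G(V)$, each $g\in SG(V)$ is joined to the identity by a path, so continuity shows that right translation by $g$ on $GL_1R$ is isotopic to the identity and preserves each path component. The right action of $SG(V)$ therefore restricts to the identity component $SL_1R$. Writing $GL_1R$ as the disjoint union of its path components, each component is $SG(V)$-invariant and $SG(V)$-equivariantly homeomorphic to $SL_1R$ via right multiplication by a representative unit; this translation commutes with the $F(V)$-action because the multiplication is postcomposition with a self-map of $R(V)$ while $F(V)$ acts by precomposition on $S^V$. Since the simplicial bar construction respects disjoint unions in the first variable when each summand is $SG(V)$-invariant, we obtain
\[ B(GL_1R, SG(V), *) \iso \coprod_{u\in \pi_0(GL_1R)} B(SL_1R, SG(V), *). \]

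Under this splitting, a given summand corresponds to $R$-oriented $SG(V)$-bundles whose orientation has specified ``degree'' $u$, and the summand indexed by the identity $1\in\pi_0(GL_1R)$ is precisely $B(SL_1R, SG(V), *)$. Because an $SG$-structure already supplies a preferred classical orientation, the natural convention is to normalize the $R$-orientation to be of degree one, which is the content of the corollary. The main subtlety, and the step where care is needed, is to match the informal notion of ``$R$-oriented $SG(V)$-bundle'' in the statement with this degree-one summand; this amounts to checking that the $SG$-reduction canonically trivializes the degree class in $\pi_0(GL_1R)$ to the unit of $\pi_0(R)$, after which the corollary follows formally from the theorem together with the connectivity of $SG(V)$.
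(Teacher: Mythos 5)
Your proposal is correct and follows essentially the same route as the paper: apply the theorem with $G$ replaced by $SG$, use the connectivity of $SG(V)$ to split $B(GL_1R,SG(V),*)$ over $\pi_0(GL_1R)$, and single out the identity-component summand $B(SL_1R,SG(V),*)$. The ``subtlety'' you flag at the end is resolved in the paper by definition rather than by a check: an $R$-oriented $SG(V)$-bundle is \emph{defined} so that its $R$-orientation restricts on fibers to the fundamental class determined by the integral orientation, i.e.\ to the component of $1\in\pi_0(R)$, which is exactly the degree-one normalization you describe.
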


The interpretation requires a bit of care.  Orientations depend only on the connective cover
of $R$, so we may assume that $R$ is connective. An $R$-oriented bundle inherits a $k$-orientation,
where $k=\pi_0(R)$.  We specify $R$-orientations by requiring them to be consistent with
preassigned $k$-orientations.  Precisely, the $k$-orientation prescribes a Thom class in 
$H^n(T\xi;k)\iso R^n(T\xi)$ for an $n$-dimensional $G(V)$-bundle $\xi$, and we 
require an $R$-orientation to restrict on fibers to the resulting fundamental classes.
An $SG(V)$-bundle is an integrally oriented $G(V)$-bundle, and we define an
$R$-oriented $SG(V)$-bundle to be an $R$-oriented $G(V)$-bundle and an $SG(V)$-bundle
whose prescribed $k$-oriention is that induced from its integral orientation.

Along with these classifying spaces, we have Thom spectra associated to bundles 
and fibrations with $Y$-structures, such as orientations \cite[IV.2.5]{MQR}. 
We discuss $E_{\infty}$-structures on classifying spaces and on Thom spectra in
the following two sections.  

\section{$E_{\infty}$ structures on classifying spaces and orientation theory}\label{units}

Still considering a monoid-valued grouplike $\sI_c$-FCP $G$ over $F$, 
we now assume further that $R$ is a (connective) $E_{\infty}$ ring spectrum
and focus on the stable case, writing $G$ and $SG$ for $G(\bR^{\infty})$ and 
$SG(\bR^{\infty})$. The analogues for stable bundles of the 
classification results above remain valid, but we now concentrate on
$E_{\infty}$ structures on the stable classifying spaces. 

Recall from \cite[\S2]{Prequel1} that we have a functor from $\sI$-FCP's, or equivalently 
$\sI_c$-FCP's, to $\sL$-spaces.  
For any operad $\sO$, such as $\sL$, the category $\sO[\sT]$ of $\sO$-spaces has 
finite products, so it also makes sense to define monoids and groups in the category 
$\sO[\sT]$.  For a monoid $G$ in $\sO[\sT]$, the monoid product and the product induced 
by the operad action are homotopic \cite[3.4]{MayPer}.  It also makes sense to define 
left and right actions of $G$ on $\sO$-spaces.  The functors from $\sI$-FCP's to $\sI_c$-FCP's 
to $\sL$-spaces are product preserving and so preserve monoids, groups, and their actions.
Moreover, we have the following analogue of Proposition \ref{twoside1}.

\begin{prop}\label{BYGXinf} If $G$ is a monoid in $\sO[\sT]$ that acts from the right 
and left on $\sO$-spaces $Y$ and $X$, then $B(Y,G,X)$ inherits an $\sO$-space structure
from $G$, $Y$, and $X$.  In particular, $BG$ is an $\sO$-space. Moreover, the natural map 
$\ze\colon G\rtarr \OM BG$ 
is a map of $\sO$-spaces and a group completion.
\end{prop}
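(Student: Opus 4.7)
My plan is to work at the simplicial level, exploiting that $B(Y,G,X)$ is the geometric realization of the simplicial space $B_\bullet(Y,G,X)$ whose $q$-simplices are $Y\times G^q\times X$ and whose face and degeneracy operators are assembled from the unit $\ast\to G$, the multiplication $G\times G\to G$, the right action $Y\times G\to Y$, the left action $G\times X\to X$, and the evident projections.

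Since $\sO[\sT]$ has finite products, each $B_q(Y,G,X)=Y\times G^q\times X$ inherits a canonical $\sO$-space structure from its factors. By hypothesis the monoid operations and both actions are maps of $\sO$-spaces, and projections are obviously so, hence every simplicial operator in $B_\bullet(Y,G,X)$ is a map of $\sO$-spaces. Thus $B_\bullet(Y,G,X)$ is a simplicial object in $\sO[\sT]$. Geometric realization commutes with finite products of proper simplicial spaces, and with a constant simplicial factor the commutation $|\sO(j)\times X_\bullet|\iso\sO(j)\times|X_\bullet|$ is automatic; the levelwise action maps $\sO(j)\times B_q(Y,G,X)^j\rtarr B_q(Y,G,X)$ therefore realize to action maps on $B(Y,G,X)$, and the operad axioms (associativity, equivariance, unit) pass from the levelwise case. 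Properness of $B_\bullet(Y,G,X)$ follows from the standing nondegeneracy of $e\in G$ (and of the basepoints when $X$, $Y$ are based). Setting $Y=X=\ast$ yields the $\sO$-structure on $BG$.

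For the last assertion, $\OM BG$ is made into an $\sO$-space by the pointwise rule $\tha_j(c;\ga_1,\ldots,\ga_j)(t)=\tha_j(c;\ga_1(t),\ldots,\ga_j(t))$ on based loops. The natural map $\ze\colon G\rtarr\OM BG$ is obtained by adjointing the inclusion of $1$-simplices $G\times I\rtarr B(\ast,G,\ast)$, and a direct unwinding of definitions, using that both the $\sO$-structure on $G^q\subset B_\bullet(\ast,G,\ast)$ and the pointwise $\sO$-structure on $\OM BG$ are built functorially out of the one on $G$, shows that $\ze$ is an $\sO$-map. The group completion claim is then a statement about the underlying topological monoid, independent of the operad action, and is the classical group completion theorem (cf.\ \cite[3.4]{MayPer} and its antecedents): $\pi_0(\OM BG)$ is the Grothendieck group of $\pi_0(G)$, and $H_\ast(G)[\pi_0(G)^{-1}]\iso H_\ast(\OM BG)$.

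The main technical obstacle is not conceptual but bookkeeping: verifying that geometric realization genuinely transports a levelwise operad action to an operad action on the realization. That step rests on the commutation of realization with finite products of proper simplicial spaces, together with the observation that the product of a family of $\sO$-spaces is canonically an $\sO$-space; once those two facts are in place the rest of the proof is formal.
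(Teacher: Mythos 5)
Your argument is correct and is essentially the paper's own (very terse) proof written out in full: the paper simply observes that $B(Y,G,X)$ is the geometric realization of a simplicial $\sO$-space, hence an $\sO$-space, and cites \cite[3.4]{MayPer} and \cite[15.1]{Class} for the assertions about $\ze$. One small point: geometric realization of simplicial spaces commutes with finite products without any properness hypothesis, so properness is not needed to transport the levelwise operad action to the realization (it matters only for homotopy-invariance statements).
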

\begin{proof} $B(Y,G,X)$ is the geometric realization of a simplicial $\sO$-space and
is therefore an $\sO$-space.  The statements about $\ze$ hold by \cite[3.4]{MayPer} and 
\cite[15.1]{Class}.
\end{proof}

As we reproved in \cite[Corollary B.4]{Prequel1}, when $\sO$ is an $E_{\infty}$ operad 
this implies that the first delooping $\bE_1 G$ is equivalent to $BG$ as an $\sO$-space.
Said another way, the spectra obtained by applying the additive infinite loop space machine 
$\mathbf{E}$ to $G\htp \OM BG$ are equivalent to those obtained by applying $\OM \mathbf{E}$ 
to $BG\htp \mathbf{E}_0 BG$. 

When $Y$ and $X$ are $\sI_c$-FCP's with right and left actions by $G$, the $\sL$-space
structure of Proposition \ref{BYGXinf} is the same as the $\sL$-space structure obtained 
by passage to colimits from the $\sI_c$-FCP structure on $B(Y,G,X)$ of Proposition \ref{twoside1}.
This does not apply to the right $F$-space $Y=GL_1R$ for an $\sL$-spectrum $R$, but in that case 
we can check from the definition of an $\sL$-prespectrum \cite[\S5]{Prequel1} that the action map
$GL_1R\times F\rtarr GL_1R$ is a map of $\sL$-spaces (see \cite[p.\,80]{MQR}). 

We conclude that, in the stable case, the spaces $B(Y,G,X)$ that we focused on in
the previous section are grouplike $\sL$-spaces and therefore, by the additive infinite
loop space machine, are naturally equivalent to the $0^{th}$ spaces of associated spectra.  
Thus we may think of them as infinite loop spaces.  This result and its implications were 
the main focus of \cite{MQR} and much of \cite{CLM}, where the homologies of many of these 
infinite loop spaces are calculated in detail by use of the implied Dyer-Lashof homology 
operations.

Taking $X=*$ and thus focusing on classifying spaces, it is convenient to abbreviate notation 
by writing
\[ B(Y,G,*) = B(G;Y) \] 
for the classifying space of stable $G$-bundles equipped with $Y$-structures.
It comes with natural maps 
\[ t\colon Y\rtarr B(G;Y)\ \ \ \ \text{and} \ \ \ \ q\colon B(G;Y)\rtarr BG. \]  
When we specialize to $Y = SL_1R$ or $Y=GL_1R$, we abbreviate further by writing
\[  B(SG;SL_1R) = B(SG;R) \ \ \ \ \text{and} \ \ \ \  B(G;GL_1R) = B(G;R).\] 
These are the classifying spaces for stable $R$-oriented $SG$-bundles and stable
$R$-oriented $G$-bundles.  It is important to remember that these spaces depend only on 
$GL_1R$, regarded as an $F$-space and an $\sL$-space, and not on the spectrum $R$; that is, 
they are space level constructions.  With these notations, the discussion above leads to the following result, which is \cite[IV.3.1]{MQR}.

\begin{thm}\label{ordiag} let $R$ be an $\sL$-spectrum and let $\pi_0(R) = k$. Then all
spaces are grouplike $\sL$-spaces and all maps are $\sL$-maps in the following ``stable
orientation diagram''.  It displays two maps of fibration sequences.
\[ \xymatrix{
SG \ar[r]^-{e} \ar[d] & SL_1R \ar[r]^-{t} \ar[d] &  B(SG;R) \ar[r]^-{q} \ar[d]& BSG \ar[d]\\
G \ar[r]^-{e} \ar@{=} [d] & GL_1R \ar[r]^-{t} \ar[d]^{d} &  B(G;R) \ar[r]^-{q} \ar[d]^{Bd} 
& BG \ar@{=}[d]\\
G \ar[r]_-{de}  & GL_1(k) \ar[r]^-{t}  &  B(SG;k) \ar[r]^-{q} & BG \\} \]
The diagram is functorial in $R$; that is, a map $R\rtarr Q$ of $\sL$-spectra induces
a map from the diagram of $\sL$-spaces for $R$ to the diagram of $\sL$-spaces 
for $Q$.
\end{thm}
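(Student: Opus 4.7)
The plan is to unpack the diagram into its constituent $\sL$-spaces, $\sL$-maps, fibration sequences, and naturality, most of the work having been prepared by Proposition \ref{BYGXinf} and \S\ref{class}. I would proceed in three steps: identify the $\sL$-space structures and verify the relevant monoid actions live in $\sL[\sT]$; check the horizontal maps and fibration sequences; then handle the vertical maps and functoriality in $R$.

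For the $\sL$-space structures, $G$ and $SG$ become grouplike $\sL$-monoids under the product-preserving passage from monoid-valued $\sI_c$-FCP's to $\sL[\sT]$, and then $BG$ and $BSG$ are $\sL$-spaces by Proposition \ref{BYGXinf}. The spaces $GL_1R$ and $SL_1R$ are grouplike $\sL$-spaces because $R$ is an $\sL$-spectrum, and $GL_1(k)$ is one because $Hk$ may be taken to be an $\sL$-spectrum. A second application of Proposition \ref{BYGXinf} then promotes the bar constructions $B(SG;R)$, $B(G;R)$, $B(SG;k)$ to $\sL$-spaces, provided the relevant right actions are checked to live in $\sL[\sT]$. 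The crucial input, flagged in the paragraph preceding the theorem, is that for an $\sL$-prespectrum $R$ the composition action $GL_1R\times F\to GL_1R$ coming from $\OM^V R(V)$ is an $\sL$-map (see \cite[p.\,80]{MQR}); the remaining actions follow by restriction along $SG\hookrightarrow G\hookrightarrow F$ and by naturality through the discretization $d\colon R\to Hk$.

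The horizontal maps are then manifestly $\sL$-maps: the units $e$ and $de$ come from the $\sL$-spectrum maps $S\to R\to Hk$, while $t$ and $q$ of (\ref{tq}) and (\ref{pq}) are realizations of simplicial maps of $\sL$-spaces. Each row is a fibration sequence: the subsequence $Y\to B(G;Y)\to BG$ is the underlying fibration of the left-hand pullback square in \S\ref{class} (with quasifibrations replaced by fibrations in the standard way), extended one step to the left by looping using the group completion $G\simeq \OM BG$ of Proposition \ref{BYGXinf}, all within $\sL[\sT]$.

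The vertical maps are then produced by bar-construction functoriality applied to $\sL$-map input: the top pair of squares uses the inclusions $SG\hookrightarrow G$ and $SL_1R\hookrightarrow GL_1R$, while the bottom pair uses $d\colon R\to Hk$ together with the resulting $G$-equivariant $\sL$-map $GL_1R\to GL_1(k)$, the change of structural monoid from $G$ to $SG$ on the bottom row being absorbed by the observation that a $k$-orientation of a $G$-bundle is automatically an integral orientation. Commutativity of every square is naturality of the bar construction on commuting input. Functoriality in $R$ is the same mechanism: an $\sL$-spectrum map $R\to Q$ induces $\sL$-maps on unit spaces compatible with all actions, and one more pass through the bar construction propagates this to the whole diagram. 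The principal technical obstacle is the second clause of step one, namely verifying that the $F$-action on $GL_1R$ is an action in $\sL[\sT]$ rather than merely in $\sT$; this is not formal from $R$ being $E_\infty$ but requires the explicit $\sL$-prespectrum presentation, after which everything else reduces to bookkeeping around Proposition \ref{BYGXinf}.
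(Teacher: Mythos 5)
Your proposal is correct and follows essentially the same route as the paper, which derives the theorem from the discussion preceding it (and ultimately cites \cite[IV.3.1]{MQR}): $\sL$-structures on the bar constructions via Proposition \ref{BYGXinf}, the key non-formal input being that the composition action $GL_1R\times F\to GL_1R$ is an $\sL$-map from the $\sL$-prespectrum presentation, fibration sequences from the classification theory of \S\ref{class} extended leftward by the group completion $G\simeq\OM BG$, and vertical maps plus functoriality by naturality of the bar construction. You correctly isolate the one genuinely delicate point exactly where the paper does.
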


The unstable precursor (for finite dimensional $V$) and its bundle theoretic interpretation
are discussed in \cite[pp. 55-59]{MQR}. The top vertical arrows are inclusions and the map 
$d$ is just discretization.  The maps $e$ are induced by the unit $S\rtarr R$.
On passage to $0^{th}$-spaces, the unit gives a map $F=GL_1S\rtarr GL_1 R$, and we are 
assuming that we have a map $j\colon G\rtarr F$. We continue to write $e$ for the composite
$e\com j$.  Writing $BGL_1R$ for the delooping of $GL_1R$ given by the additive infinite loop
space machine, define a generalized first Stiefel-Whitney class by
\[ w_1(R) = Be\colon BG\rtarr BGL_1R.\]   
Then $w_1(R)$ is the universal obstruction to giving a stable $G$-bundle an $R$-orientation;
see \cite[pp. 81--83]{MQR} for discussion. The map $t$ 
represents the functor that sends a unit of $R^0(X)$ to the trivial $G$-bundle over $X$ 
oriented by that unit. The map $q$ represents the functor that sends an $R$-oriented
stable $G$-bundle over $X$ to its underlying $G$-bundle, forgetting the orientation.

There is a close relationship between orientations and trivializations that plays a major
role in the applications of \cite{CLM, MQR}.  We recall some of it here, athough it is 
tangential to our main theme.   The following result is the starting point.
Its unstable precursor and bundle theoretic interpretation are discussed in 
\cite[pp. 59-60]{MQR}.   It and other results to follow have analogues in the 
oriented case, with $G$ and $F$ replaced by $SG$ and $SF$.

\begin{thm}\label{compdiag1}  Let $R$ be an $\sL$-spectrum. 
Then all spaces in the left three squares are grouplike $\sL$-spaces and 
all maps are $\sL$-maps in the following diagram.  It displays a map of
fibration sequences, and it is natural in $R$.
\[ \xymatrix{
G \ar[r]^{j} \ar@{=}[d] & F \ar[r]^-{t} \ar[d]^{e}  & F/G \ar[r]^-{q} \ar[d]^{Be} 
& BG \ar@{=}[d] \ar[r]^-{Bj} & BF \ar[d]^{Be}\ar[r] & \cdots\\
G \ar[r]_-{e} & GL_1R \ar[r]_-{t} & B(G;R) \ar[r]_-{q} & BG \ar[r]_-{w_1(R)} 
& BGL_1R \ar[r] & \dots\\} \]
\end{thm}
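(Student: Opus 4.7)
The strategy is to realize both horizontal fibration sequences via the two-sided bar construction of Proposition \ref{BYGXinf}, and then produce the vertical arrows by functoriality of that construction in the first variable, applied to the unit $e\colon F = GL_1S \rtarr GL_1R$.

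For the top row, following the convention of (\ref{ersatz}), I write $F/G = B(F,G,*)$; with the map $t$ of (\ref{tq}) and the map $q$ of (\ref{pq}), the standard bar construction argument produces the fibration sequence
\[ G \overto{j} F \overto{t} B(F,G,*) \overto{q} BG \overto{Bj} BF \rtarr \cdots . \]
Since $G$ is a grouplike monoid in $\sL$-spaces and $F$ carries commuting right and left $G$-actions, one of them factored through $j$, Proposition \ref{BYGXinf} makes every space in this sequence an $\sL$-space and every arrow an $\sL$-map. The bottom row is just the stable orientation diagram of Theorem \ref{ordiag}, with $B(G;R) = B(GL_1R, G, *)$, which already lives in $\sL$-spaces and $\sL$-maps by that result.

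For the vertical arrows, the unit $S \rtarr R$ induces an $\sL$-map $e\colon F\rtarr GL_1R$ on unit spaces, and this map is $F$-equivariant by the naturality of the $F$-action on zero$^{\text{th}}$ spaces recalled in \S\ref{class}; restricted along $j$ it is therefore $G$-equivariant. Functoriality of the bar construction in its first variable then delivers an $\sL$-map
\[ Be \colon B(F,G,*) \rtarr B(GL_1R, G, *) = B(G;R), \]
and applying the additive infinite loop space machine to $e$ produces the rightmost $\sL$-map $Be\colon BF \rtarr BGL_1R$. Commutativity of each individual square is immediate from the naturality in $Y$ of the maps $t\colon Y \rtarr B(Y,G,*)$ and $q\colon B(Y,G,*) \rtarr BG$; the rightmost square commutes because $w_1(R)$ is by definition $B(e\com j) = Be\com Bj$. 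Naturality in $R$ follows because any map $R \rtarr R'$ of $\sL$-spectra induces an $F$-equivariant $\sL$-map $GL_1 R \rtarr GL_1 R'$, and this compatibility propagates through the bar construction and its deloopings.

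The one genuinely technical point is the verification that $e\colon F \rtarr GL_1R$ is a map of right $F$-$\sL$-spaces, where $F$ acts on itself by multiplication while acting on $GL_1R$ through the ring spectrum structure of $R$. This was already exploited in the discussion following Proposition \ref{twoside1} and is the reason the $\sL$-structure threads consistently through the diagram; once it is in place, the remainder of the argument is pure formalism of the two-sided bar construction.
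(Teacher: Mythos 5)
Your proposal is correct and follows essentially the same route as the paper: the left three squares are produced by functoriality of the two-sided bar construction in the $Y$-variable (the paper identifies the middle vertical map as $B(e,\mathrm{id},\mathrm{id})\colon B(F,G,*)\rtarr B(GL_1R,G,*)$), with the $\sL$-structures supplied by Proposition \ref{BYGXinf} and the key technical input being that the $F$-action on $GL_1R$ is an $\sL$-map. Your treatment of the fourth square via $w_1(R)=B(e\com j)=Be\com Bj$ is a harmless repackaging of the paper's remark that it is obtained from the induced map of fibration sequences of spectra.
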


The left map labeled $Be$ is $B(e,\text{id},\text{id})\colon B(F,G,*) \rtarr B(GL_1R,G,*)$. 
Since the first three squares of the diagram are commutative diagrams of $\sL$-spaces, 
we get the fourth square from the induced fibration of spectra.  It relates the $J$-map $Bj$, 
which is the universal obstruction to 
the $F$-trivialization of $G$-bundles, to the universal obstruction $w_1(R) = Be$ to the 
$R$-orientability of $G$-bundles.  In particular, it gives a structured interpretation of the fact 
that if a $G$-bundle is $F$-trivializable, then it is $R$-orientable for any $R$.   

The previous result works more generally with $F$ replaced by any $G'$ between $G$ 
and $F$, but we focus on $G'= F$ since that is the case of greatest interest.  We state the 
following analogue for Thom spectra in the general case. Its proof falls directly out of the
definitions \cite[IV.2.6]{MQR}. However, for readability, we agree to start with $H\rtarr G$ 
rather than $G\rtarr G'$, in analogy with the standard convention of writing $H$ for a generic 
subgroup of a group $G$.  We are thinking of the case $R=MH$ in Theorem \ref{compdiag1}.  The case 
$G = F$ plays a key role in Ray's study \cite{Ray} of the bordism $J$-homomorphism.

\begin{prop}\label{bored} Let $i\colon H\rtarr G$ be a map of grouplike
monoid-valued $\sI_c$-functors over $F$. Then there is a map
of $\sL$-spaces $j\colon H\backslash G\rtarr GL_1(MH)$ that concides 
with $j\colon G\rtarr F$ when $H=e$ and makes the following diagrams 
of $\sL$-spaces commute. 
\[ \xymatrix{  
G\ar[r]^-{u} \ar[dr]_{e} & H\backslash G \ar[d]^{j} \\
& GL_1(MH)  \\}
\ \ \ \text{and} \ \ \
\xymatrix{
B(H;H\backslash G) \ar[d]_{Bj} \ar[r]^-{q} & BH \\
B(H;MH) \ar[ur]_-{q} \\} \]
\end{prop}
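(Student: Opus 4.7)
The plan is to construct $j$ directly at the simplicial level, exploiting the fact that both $H\backslash G$ and the Thom space $MH(V)$ are built as two-sided bar constructions. With the conventions of \cite{MQR}, the $V$-th Thom space of $MH$ may be described as the quotient $B(*, H(V), S^V)/B(*, H(V), *)$, whose $q$-simplices downstairs are represented by $[h_1, \ldots, h_q, s]$ with $h_i \in H(V)$ and $s \in S^V$, and whose last face map is $[h_1, \ldots, h_q, s] \mapsto [h_1, \ldots, h_{q-1}, h_q(s)]$, using the action of $H(V)$ on $S^V$ via $i$ and $j\colon G \rtarr F$.

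With this model in hand, I would define $j$ at the FCP level by the simplicial formula
\[ [h_1, \ldots, h_q, g] \longmapsto \bigl( s \mapsto [h_1, \ldots, h_q, g(s)] \bigr) \in \OM^V MH(V), \]
where $g(s)$ refers to the action of $g \in G(V)$ on $S^V$ through $G \rtarr F$. The key compatibility is that the last face of $B(*, H, G)$ replaces $(h_q, g)$ by $i(h_q) g$, and under the formula this sends $s$ to $[h_1, \ldots, h_{q-1}, h_q(g(s))]$, which agrees with applying the last face of $MH(V)$ after $s \mapsto g(s)$. The other face and degeneracy identities, as well as compatibility with the smash-product structure that defines the $\sI_c$-FCP, are formal, because the bar construction and $\OM^V(-)$ preserve all the products in sight. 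Passing to colimits over $V$ produces a map of $\sL$-spaces, and it lands in $GL_1 MH$ because each $g$ acts on $S^V$ as a homotopy equivalence.

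For the first triangle, $u\colon G \rtarr B(*, H, G)$ is the inclusion as $0$-simplices, so $j \com u$ sends $g$ to $s \mapsto [g(s)]$, which is precisely $e(g)$ regarded as an element of $GL_1 MH$ via $G \rtarr F = GL_1 S \rtarr GL_1 MH$. Setting $H = e$ collapses $B(*, H, G)$ to $G$ and $MH$ to $S$, so the formula reduces to $j\colon G \rtarr F$, as required. For the second triangle, apply $B(-, H, *)$ to $j$ and use that $q\colon B(H; Y) \rtarr BH$ is natural in $Y$, so any map of left $H$-spaces $Y \rtarr Z$ induces a commutative triangle over $BH$.

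The main obstacle is bookkeeping: matching the simplicial conventions for $B(*, H, G)$ and for $MH(V)$ as described in \cite[IV.2]{MQR}, and then verifying that the simplex-level formula is compatible with the PFSP structure on $MH$ (i.e., with the maps $MH(V) \sma MH(W) \rtarr MH(V \oplus W)$), so that on passage to $\bR^{\infty}$ the resulting map is genuinely a map of $\sL$-spaces and not merely a map of spaces. Modulo these routine checks, the proposition is essentially a reformulation of the definition of $MH$ as a bar construction, which is presumably why the paper remarks that the proof falls directly out of the definitions.
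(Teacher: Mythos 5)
Your construction is exactly the one underlying the paper's citation of \cite[IV.2.6]{MQR}: the map $j$ is the adjoint of the evident simplicial map $B(*,H(V),G(V))\sma S^V\rtarr B(*,H(V),S^V)/B(*,H(V),\infty)$, and the face-map check you perform is the only nontrivial verification, with both triangles then following from the $0$-simplex description of $u$ and of the unit $S\rtarr MH$ and from naturality of $q$ in the $Y$-variable. The only quibble is a handedness slip at the end (with the convention $B(H;Y)=B(Y,H,*)$ you need $j$ to be a map of \emph{right} $H$-spaces, which your formula visibly is), so the proposal is correct and essentially identical to the intended proof.
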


\section{Universally defined orientations of $G$-bundles} \label{units2}

Universally defined canonical orientations of $G$-bundles are of central importance
to both the early work of the 1970's and to current work, and we shall discuss them
in this section and the next. The early geometric examples are the 
Atiyah--Bott--Shapiro $kO$-orientations of $Spin$-bundles and $kU$-orientations 
of $U$-bundles and the Sullivan (odd primary) spherical $kO$-orientations 
of $SPL$-bundles.  We are interested in stable bundles and in the relationship
of their orientations to infinite loop space theory and to stable homotopy theory.
We fix an $E_{\infty}$ ring spectrum $R$.

There are two homotopical ways of defining and thinking about such universally defined 
orientations of $G$-bundles, one on the classifying space level and the 
other on the Thom spectrum level.   The main focus of \cite{MQR} was on 
the classifying space level and calculational applications to 
geometric topology.  The main modern focus is on the Thom spectrum level 
and calculational applications to stable homotopy theory, as in 
\cite{five, AHS, Blum1, BCS}.  We work on the classifying space level here and 
turn to the Thom spectrum level and the comparison of the two in the next section.

\begin{defn}\label{defBorient} 
An $R$-orientation of $G$ is a map of $H$-spaces $g\colon BG\rtarr B(G;R)$
such that $q\com g = id$ in the homotopy category of spaces.  A spherical
$R$-orientation is a map of $H$-spaces $g\colon BG\rtarr B(F;R)$ such 
that the following diagram commutes in the homotopy category.
\[  \xymatrix{
   BG\ar[dr]_{Bj} \ar[r]^-{g} & B(F;R) \ar[d]^q \\
   & BF \\} \]
We call these $E_{\infty}$ $R$-orientations
if $g$ is a map of $\sL$-spaces such that $q\com g = id$ or $q\com g = Bj$ 
in the homotopy category of $\sL$-spaces.  
\end{defn}

There is a minor technical nuisance that perhaps should be pointed out but should 
not be allowed to interrupt the flow. In practice, instead of 
actual maps $g$ of $\sL$-spaces as in the definition, we often encounter 
diagrams of explicit $\sL$-maps of the form
\[ \xymatrix@1{
X & X' \ar[l]_-{\epz} \ar[r]^{\nu} & Y \\} \]
in which, ignoring the $\sL$-structure, $\epz$ is a weak equivalence. The category
of $\sL$-spaces has a model structure with such maps $\epz$ as the weak equivalences,
hence such a diagram gives a well-defined map in the homotopy category of $\sL$-spaces.
We agree to think of such a diagram with $X = BG$ and $Y=B(G;R)$
as an $E_{\infty}$ $R$-orientation.

With the notation of (\ref{ersatz}), $G\backslash G = EG$.  This is a contractible 
space, and $\epz\colon EG\rtarr *$ is 
both a $G$-map and a map of $\sL$-spaces.  The case $H=G$ of Proposition \ref{bored}, 
together with (\ref{later}), gives a structured reformulation 
of the standard observation that $G$-bundles have tautological $MG$-orientations.

\begin{cor}\label{BTaut}  The following diagram of $\sL$-spaces commutes, its map
$\epz$ and top map $q$ are equivalences, and $q\com \epz^{-1} = \text{id}$ in the homotopy 
category of spaces.
\[ \xymatrix{
BG & B(EG,G,*)=B(G;G\backslash G) \ar[l]_-{\epz} \ar[d]^-{Bj} \ar[r]^-{q} & BG \\
& B(GL_1(MG),G,*)=B(G;MG) \ar[ur]_-{q} &  \\} \]
\end{cor}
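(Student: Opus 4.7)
The plan is to specialize Proposition \ref{bored} to $H=G$ with $i=\text{id}_G$, and then to combine the resulting commutative triangle with the homotopy identity $q\com\epz^{-1}\htp\text{id}$ already recorded in equation (\ref{later}).

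First I would unwind notation: by the convention (\ref{ersatz}), $G\backslash G=B(*,G,G)=EG$, and, by the abbreviation $B(G;Y)=B(Y,G,*)$ introduced in \S\ref{units}, we have $B(G;G\backslash G)=B(EG,G,*)$. Proposition \ref{bored} applied with $i=\text{id}_G$ then produces an $\sL$-map $j\colon EG\rtarr GL_1(MG)$ extending the given $j\colon G\rtarr F$, together with a commutative diagram of $\sL$-spaces which, after these identifications, is precisely the lower triangle of the corollary. So commutativity of the triangle comes for free from Proposition \ref{bored}.

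Next I would dispose of the upper row $BG\overfrom{\epz}B(EG,G,*)\overto{q}BG$. The map $\epz$ is a weak equivalence because $G$ acts principally on $EG$, so (as recalled in \S\ref{class}) the general map $\epz\colon B(Y,G,X)\rtarr Y\times_G X$ of (\ref{epz}) is an equivalence; here it has target $EG\times_G *=EG/G=BG$. The top map $q$ is a weak equivalence because $EG$ is contractible, so that $EG\rtarr *$ is a homotopy equivalence of right $G$-spaces and induces an equivalence on $B(-,G,*)$ under the standing nondegeneracy assumption on $e\in G$. The required identity $q\com\epz^{-1}\htp\text{id}$ in the homotopy category of spaces is then literally the content of (\ref{later}), applied with $Y=EG$.

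The one point genuinely requiring care is that every map in the diagram respects the $\sL$-structure rather than being merely continuous. But $\epz$ and $q$ are obtained by functorially applying the two-sided bar construction to $\sL$-maps between $\sL$-spaces, hence are themselves $\sL$-maps by Proposition \ref{BYGXinf}; and Proposition \ref{bored} already delivers $j$, and therefore also $Bj$, as $\sL$-maps. I expect this $\sL$-structure bookkeeping to be the main (mild) obstacle: everything is obviously correct topologically, and the work lies in checking that the extra structure survives at each step.
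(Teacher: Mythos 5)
Your proof is correct and follows the paper's own route exactly: the corollary is obtained by specializing Proposition \ref{bored} to $H=G$ (using $G\backslash G=EG$) and invoking (\ref{later}) for $q\com\epz^{-1}=\text{id}$, with the equivalence claims for $\epz$ and $q$ following from principality of the $EG$-action and contractibility of $EG$ as you say. No gaps.
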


The following two direct consequences of Definition \ref{defBorient}  are \cite[V.2.1, V.2.3]{MQR}.
Let $\ph\colon G\times G\rtarr G$ be the product and $\chi\colon G\rtarr G$ be the
inverse map of $G$. In the cases we use, both are given by maps 
of $\sL$-spaces.

\begin{prop}\label{split}  If $g\colon BG\rtarr B(G;R)$ is an $R$-orientation, then
the composite 
\[ \xymatrix@1{ 
GL_1R\times BG \ar[r]^-{t\times g} & B(G;R)\times B(G;R)\ar[r]^-{\ph} & B(G;R)\\} \]
is an equivalence of $H$-spaces.  If $g$ is an $E_{\infty}$ 
$R$-orientation, then $\ph\com (t\times g)$ is an equivalence of infinite loop spaces.
\end{prop}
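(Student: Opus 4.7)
The plan is the standard one for splitting fibrations with a section: exhibit $\Phi = \ph\com (t\times g)$ as a map over $BG$ that induces the identity on base and on fibres, and then upgrade the resulting weak equivalence to an equivalence of $H$-spaces, respectively of infinite loop spaces, using the multiplicative structure.

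Theorem \ref{ordiag} gives the fibration sequence $GL_1R \overto{t} B(G;R) \overto{q} BG$ of grouplike $\sL$-spaces and $\sL$-maps; in particular $q$ is an $H$-map. The projection $\pi\colon GL_1R\times BG\rtarr BG$ is trivially a fibration. Since $q$ is multiplicative,
\[
q\com \ph\com (t\times g) \htp (qt)\cdot (qg) \htp \ast\cdot \id_{BG} \htp \pi,
\]
where $\cdot$ denotes the $H$-space product on $BG$; this uses $qt\htp \ast$ (from the fibration) and $qg\htp \id_{BG}$ (section). Thus $\Phi$ covers $\id_{BG}$ up to homotopy, so it is a map of fibrations over $BG$ in the homotopy category. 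Arranging $g$ to be pointed, the restriction of $\Phi$ over the basepoint sends $x\in GL_1R$ to $t(x)\cdot g(\ast)\htp t(x)$, so on fibres $\Phi$ agrees up to homotopy with the fibre inclusion $t$. The five lemma applied to the long exact sequences of homotopy groups then forces $\Phi$ to be a weak equivalence.

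For the $H$-space assertion, $\Phi$ is itself an $H$-map: $t$ and $g$ are $H$-maps, and the pointwise product of two $H$-maps into a homotopy commutative $H$-space is an $H$-map, so the claim reduces to the homotopy commutativity of $B(G;R)$, which holds because $B(G;R)$ is a grouplike $\sL$-space. A weak equivalence that is an $H$-map between $H$-spaces of CW type is an equivalence of $H$-spaces. For the $E_\infty$ assertion, when $g$ is an $\sL$-map the map $t\times g$ is an $\sL$-map, and $\ph\com (t\times g)$ is likewise an $\sL$-map: the monoid product $\ph$ on the $\sL$-space $B(G;R)$ is homotopic, through $\sL$-maps, to the product induced by the operad action by \cite[3.4]{MayPer}, and the operad-induced product of two $\sL$-maps into an $\sL$-space is always an $\sL$-map. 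A weak equivalence of grouplike $\sL$-spaces that is an $\sL$-map deloops via the additive infinite loop space machine to an equivalence of spectra, so $\Phi$ is an equivalence of infinite loop spaces.

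The main technical obstacle is the final coherence step, namely showing that $\ph\com (t\times g)$ is honestly an $\sL$-map rather than merely an $H$-map. This is precisely where the distinction between the $H$-space and $E_\infty$ structures on $B(G;R)$ becomes nontrivial, and it is what justifies passing from the $H$-space version of the equivalence to the $E_\infty$ version.
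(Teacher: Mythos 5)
Your argument is essentially the one the paper is citing from \cite[V.2.1, V.2.3]{MQR}: use $q\com\ph\com(t\times g)\htp\pi$ and the restriction to the fibre to compare the fibration sequence $GL_1R\overto{t} B(G;R)\overto{q} BG$ of Theorem \ref{ordiag} with the product fibration, apply the five lemma, and then observe that the multiplicative structure upgrades the weak equivalence. The one step to tighten is the final coherence claim: it is \emph{not} true that the operad-induced product of two $\sL$-maps into an $\sL$-space is again an $\sL$-map on the nose. If $f,g\colon X\rtarr Y$ are $\sL$-maps and $c\in\sL(2)$, the two composites one must compare act through the operad elements $\ga(d;c,\dots,c)$ and $\ga(c;d,d)\si$ for $d\in\sL(j)$; these lie in the same contractible space $\sL(2j)$ but are not equal, so $\ph\com(t\times g)$ is only an $\sL$-map up to homotopy. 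The correct (and sufficient) statement is that $t\times g$ is an $\sL$-map into the product $\sL$-space $B(G;R)\times B(G;R)$ and that the product $\ph$ on a grouplike $\sL$-space is an infinite loop map: the additive machine preserves finite products up to natural equivalence, and $\ph$ deloops to the sum on the associated spectrum (equivalently, under the group completion $X\htp\bE_0X$ the operadic product corresponds to the loop sum). Alternatively, as in Proposition \ref{BYGXinf} and the sentence preceding the statement of the proposition, in the cases at hand $\ph$ is realized by a strict product in $\sL[\sT]$ homotopic to the operadic one, and then the composite is an honest $\sL$-map. With that substitution your proof is complete and agrees with the paper's.
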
  

\begin{thm}\label{compdiag2} An $E_{\infty}$ spherical $R$-orientation $g\colon BG\rtarr B(F;R)$
induces a map $f$ such that the following is a commutative diagram of infinite loop spaces.  
It displays a map of fibration sequences. 
\[  \xymatrix{
F \ar[r]^-{t} \ar[d]_{\chi}  & F/G \ar[r]^-{q} \ar[d]^{f} 
& BG \ar[d]^{g} \ar[r]^-{Bj} & BF \ar@{=}[d] \ar[r]^{Bt} & B(F/G) \ar[d]^{Bf} \ar[r] & \cdots \\
F \ar[r]_-{e} & GL_1R \ar[r]_-{t} & B(F;R) \ar[r]_-{q} & BF \ar[r]_-{Be} 
& BGL_1R \ar[r] & \cdots\\} \]
\end{thm}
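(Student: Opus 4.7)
The plan is to build $f$ as an induced map on homotopy fibers, and then to verify that the resulting diagram is a map of fibration sequences of $\sL$-spaces, which upon passage through the additive infinite loop space machine becomes the claimed map of fibration sequences of infinite loop spaces.

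First I would use the hypothesis $q\com g \htp Bj$ (in the homotopy category of $\sL$-spaces) to view $g$ as a map of $\sL$-spaces over $BF$. Both rows of the target diagram are essentially already at hand: the bottom row is the extension of Theorem \ref{compdiag1} applied with $G$ replaced by $F$, and the top row is the long fibration sequence of $Bj\colon BG\rtarr BF$, built from iterated bar constructions $F/G = B(F,G,*)$ and $B(F/G)$ etc.\ via Proposition \ref{BYGXinf}. After replacing the homotopy-commuting square $q\com g \htp Bj$ by a strictly commuting diagram using the model structure on $\sL$-spaces indicated in the remark following Definition \ref{defBorient}, I would take homotopy fibers over $BF$ to obtain the desired $\sL$-map $f\colon F/G\rtarr GL_1R$.

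Second, I would verify the squares one by one. The middle square $t\com f \htp g\com q$ is automatic from the construction of $f$ as restriction to fibers, and the square immediately to its right commutes by hypothesis. The leftmost square, $f\com t \htp e\com \chi$, is the delicate one: extending both rows one further step to the left and identifying $\OM BF\htp F$ in each, the connecting map on top becomes $t$ and the connecting map on the bottom becomes $e$; the inversion $\chi$ enters because of the standard sign (inversion) convention in the connecting map of a fibration sequence of grouplike $H$-spaces, and its presence is forced by compatibility of these identifications with the middle square. The rightmost square with $Bf$ is obtained by applying the delooping $B$ to the middle square of $\sL$-spaces, invoking Proposition \ref{BYGXinf} once more to ensure that $Bf$ is again an $\sL$-map. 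Passing the entire picture through the additive infinite loop space machine then promotes it to a map of fibration sequences of infinite loop spaces.

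The main obstacle I expect is producing $f$ as an honest $\sL$-map rather than merely as a homotopy class: this requires genuine use of the model structure on $\sL$-spaces mentioned after Definition \ref{defBorient} (or, alternatively, a direct bar-construction argument that exploits the fact that both $F/G = B(F,G,*)$ and $B(F;R) = B(GL_1R,F,*)$ are built from the same operadic data, with $g$ lifting to a compatible simplicial map), together with enough functoriality to guarantee that the homotopy fiber over $BF$ of an $\sL$-map of $\sL$-spaces is again an $\sL$-map. Once this is in place, the identification of the left square and the extension one step further to the right via $Bf$ are essentially formal.
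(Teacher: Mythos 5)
Your proposal matches the paper's (and \cite[V.2.3]{MQR}'s) intended argument: the defining homotopy $q\com g\htp Bj$ of $\sL$-maps is precisely the third square, the bottom row is Theorem \ref{compdiag1} with $G$ replaced by $F$, and $f$ is obtained by passage to fibers over $BF$, with the whole picture then fed through the additive infinite loop space machine. The paper presents this as a ``direct consequence'' of Definition \ref{defBorient}, and your account of the $\chi$ in the leftmost square via the connecting-map convention and of $Bf$ by delooping is exactly the standard bookkeeping it leaves implicit.
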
 

The third square is a factorization of the $J$-homomorphism map $Bj$. It is used in
conjunction with the following observation, which is 
\cite[V.2.2]{MQR}. For a grouplike $H$-space $X$ and $H$-maps $\al,\be\colon X\rtarr X$, 
we define $\al/\be = \ph(\al\times \chi \be)\DE\colon X\rtarr X$; when we think of $X$ as 
an ``additive'' $H$-space, we write this as $\al - \be$.  These are infinite loop maps when $\al$ 
and $\be$ are infinite loop maps.   

\begin{prop}\label{cannibal} An (up to homotopy) map of ring spectra $\ps\colon R\rtarr R$ 
induces a map $c(\psi)$ such that the following diagram is homotopy
commutative.
\[ \xymatrix{
     GL_1R \ar[d]_{\psi/1} \ar[r]^{t} & B(G;R)  \ar[dl]_{c(\ps)} \ar[d]^{(B\ps)/1}\\\
     GL_1R \ar[r]_{t} & B(G;R) \\} \]
If $R$ is an $\sL$-spectrum and $\ps$ is a map of $\sL$-spectra, then the diagram is a
homotopy commutative diagram of maps of infinite loop spaces.
\end{prop}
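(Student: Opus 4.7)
The plan is to define $c(\psi)$ as a canonical lift of $(B\psi)/1\colon B(G;R)\rtarr B(G;R)$ through the fiber inclusion $t\colon GL_1R\rtarr B(G;R)$, and then to verify the upper triangle by restricting to the fiber.

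First, since $\psi\colon R\rtarr R$ is a ring spectrum map (up to homotopy), its restriction to zeroth spaces yields $\psi\colon GL_1R\rtarr GL_1R$, which commutes up to homotopy with the right $F$-action on $GL_1R$, and hence with the right $G$-action pulled back along $j$. One can therefore form
\[ B\psi = B(\psi,\id,\id)\colon B(GL_1R,G,*) \rtarr B(GL_1R,G,*), \]
which covers the identity on $BG$, so that $q\com B\psi = q$. Using the grouplike $H$-space (or $\sL$-space) structure on $B(G;R)$ supplied by Proposition \ref{BYGXinf}, one forms $(B\psi)/1 = \ph\com(B\psi\times\chi)\com\DE$. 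Since $q$ is an $H$-map, $q\com((B\psi)/1) = q/q$ is null-homotopic, so $(B\psi)/1$ factors up to homotopy through the fiber inclusion $t$, producing $c(\psi)\colon B(G;R)\rtarr GL_1R$ with $t\com c(\psi)\htp (B\psi)/1$.

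Second, the lift must be pinned down so that the upper triangle also commutes. Since $B\psi = B(\psi,\id,\id)$, direct inspection of the bar construction gives $B\psi\com t = t\com\psi$; combining this with the fact that $t$ is an $H$-map yields
\[ (B\psi)/1\com t \;=\; (B\psi\com t)/(1\com t) \;=\; (t\com\psi)/t \;=\; t\com(\psi/1). \]
The set of homotopy classes of lifts of $(B\psi)/1$ through $t$ is a torsor over $[B(G;R),\OM BG]$, and the above fiber-wise compatibility selects the lift satisfying $c(\psi)\com t\htp \psi/1$, giving the upper triangle.

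For the $E_\infty$ enhancement, when $\psi$ is a map of $\sL$-spectra, $\psi\colon GL_1R\rtarr GL_1R$ is an $\sL$-map; hence so are $B\psi$ and $(B\psi)/1$, and the fibration sequence $GL_1R\rtarr B(G;R)\rtarr BG$ is a sequence of $\sL$-spaces. The entire factorization argument can then be carried out in the homotopy category of $\sL$-spaces, making $c(\psi)$ into an $\sL$-map. The main obstacle is the uniqueness-of-lift issue in the second step: existence of some lift is automatic from the null-homotopy, but pinning it down so that both triangles commute simultaneously requires the explicit bar-construction identity $B\psi\com t = t\com\psi$, which identifies the correct homotopy class of lift.
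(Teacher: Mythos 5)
Your strategy is genuinely different from the one the paper (following \cite[V.2.2]{MQR}) has in mind, and the difference matters. There, $c(\ps)$ is not produced by an abstract lifting argument: it is \emph{constructed}, as the map sending an $R$-oriented bundle $(\xi,\mu)$ to the unit $(\ps\com\mu)/\mu$. Concretely, $\ps/1\colon GL_1R\rtarr GL_1R$ becomes a map of right $G$-spaces when the target is given the \emph{trivial} action (the quotient of two orientations of the same bundle does not see the change of trivialization), so the bar construction produces a map $B(G;R)\rtarr B(GL_1R,G_{\mathrm{triv}},*)\htp GL_1R\times BG$ which one projects to $GL_1R$. With that definition $c(\ps)\com t=\ps/1$ holds essentially on the nose, and the other triangle follows by inspection; the $\sL$-space statement follows because the whole construction is made out of $\sL$-maps when $\ps$ is one. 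Your proposal instead manufactures $c(\ps)$ from the fibration sequence, and that is where it breaks.

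The gap is in your second step. From $t\com c(\ps)\htp (B\ps)/1$ together with $(B\ps)/1\com t\htp t\com(\ps/1)$ you may conclude only that $t\com(c(\ps)\com t)\htp t\com(\ps/1)$, i.e.\ that $c(\ps)\com t$ and $\ps/1$ agree \emph{after} composing with $t$. Since $t$ is far from injective on homotopy classes (its fiber is $G\htp\OM BG$), this does not yield $c(\ps)\com t\htp\ps/1$: the difference $(c(\ps)\com t)/(\ps/1)$ is a possibly nontrivial map factoring through $e\colon G\rtarr GL_1R$. To correct a given lift $c$ so that the upper triangle also commutes, you would need to extend that difference class along $t^*\colon[B(G;R),G]\rtarr[GL_1R,G]$, and nothing in your argument shows this is possible; the bar-construction identity $B\ps\com t=t\com\ps$ controls the restriction of $(B\ps)/1$ to the fiber, not the restriction of your chosen lift. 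So ``the fiber-wise compatibility selects the lift'' is an assertion, not a proof, and the same difficulty recurs, compounded by further choices, in the $\sL$-space refinement. (A smaller wrinkle: forming $B(\ps,\id,\id)$ at all requires $\ps$ to commute strictly, not just up to homotopy, with the $F$-action on $GL_1R$.) The repair is to abandon the lifting argument and define $c(\ps)$ by the explicit quotient-of-orientations formula, for which both triangles commute by construction.
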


The intuition is that $c(\ps)$ is given by taking the quotient $(\ps\com \mu)/\mu$ of
an orientation $\mu$ and the twisted orientation $\ps\com \mu$ to obtain a unit of $R$.

In the applications of this result, it is crucial to apply the last sentence to the Adams operations 
$\ps^r\colon kO\rtarr kO$ but, even at this late date, I would not know how to justify that 
without knowing about bipermutative categories, algebraic $K$-theory, and the relationships 
among bipermutative 
categories, $E_{\infty}$ ring spaces, and $E_{\infty}$ ring spectra.  Perhaps the deepest work in 
\cite{MQR}, joint with Tornehave, provides such a justification by using Brauer lifting to 
relate the algebraic $K$-theory of the algebraic closures $\bar{\bF}_q$ of finite fields to 
topological $K$-theory on the multiplicative infinite loop space level.
The proof makes essential use of the results described in \cite[\S10]{Prequel1}
on the localization of unit spectra $sl_1R$. I'll describe how the argument goes in 
\S\ref{Brauer}.

As noted, the main geometric examples are the Atiyah--Bott--Shapiro orientations and the 
Sullivan (odd primary) orientation. For the latter, work of Kirby-Siebenmann \cite{KS}) 
shows that $BSPL$ is equivalent to $BSTop$ away from $2$. This is a major convenience 
since $STop$ fits into our framework of monoid-valued $\sI_c$-FCP's
and $SPL$ does not.  Using deep results of Adams and Priddy \cite{AP} and 
Madsen, Snaith, and Tornehave \cite{MST}, I proved the following result in
\cite[V.7.11, V.7.16]{MQR} by first constructing an infinite loop map $f$ such that the
left square commutes in the diagram of Theorem \ref{compdiag2} and then constructing $g$.

\begin{thm}\label{spherical} Localizing at a prime (odd in the case of $STop$), 
the Atiyah--Bott--Shapiro $kO$-orientation of $Spin$ and $kU$-orientation of 
$U$ and the Sullivan $kO$-orientation of $STop$ are $E_{\infty}$ spherical 
orientations.
\end{thm}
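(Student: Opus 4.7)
The plan is to reverse the flow of Theorem \ref{compdiag2}. Rather than deriving $f$ from a given $\sL$-map $g$, I would first construct an infinite loop map $f\colon F/G\rtarr GL_1R$ making the left-hand square of that diagram commute, and then extend one step to the right along the displayed fibration sequence to manufacture the desired $g$. This two-step packaging matches the hint in the paragraph preceding the theorem and isolates the genuinely deep input in the construction of $f$.

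The passage from $f$ to $g$ is essentially formal. The bottom row of Theorem \ref{compdiag2} is a fibration sequence of $\sL$-spaces arising from an associated fibration sequence of $\sL$-spectra. Once $f$ has been promoted to an infinite loop map with $f\com t\htp e\com \chi$ through infinite loop maps, the Puppe sequence for that spectrum-level fibration determines an infinite loop map $g\colon BG\rtarr B(F;R)$, unique up to infinite loop homotopy, with $q\com g\htp Bj$. By construction $g$ is then a spherical $R$-orientation in the sense of Definition \ref{defBorient}, and its compatibility with the $\sL$-structures is automatic from the compatibility of $f$. For the $STop$ case, I would perform this whole argument after localizing at an odd prime $p$ and use the Kirby-Siebenmann equivalence $BSPL\htp BSTop$ to transport the Sullivan orientation into the monoid-valued $\sI_c$-FCP framework in which Theorems \ref{compdiag1} and \ref{compdiag2} are available.

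The substantive step is producing $f$ itself as an infinite loop map. In each of the three cases the classical orientation, together with the cannibalistic class construction of Proposition \ref{cannibal} applied to Adams-type operations on $kO$ or $kU$, furnishes an $H$-map candidate $f\colon F/G\rtarr GL_1R$ whose left-square compatibility with $e$ and $\chi$ can be verified on homotopy groups or on characteristic classes. The hard part is upgrading this $H$-map to an $\sL$-map. Here one invokes the Adams-Priddy uniqueness theorem \cite{AP}, which shows that $BSO$ and $BSU$ carry unique infinite loop structures after localization at any prime, together with the Madsen-Snaith-Tornehave analysis \cite{MST} of the set of infinite loop self-maps of the relevant targets and of infinite loop maps out of $F/Spin$, $F/U$, and $F/STop$. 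Together these rigidify $f$ to an infinite loop map and supply enough control over the set of infinite loop maps $F/G\rtarr GL_1R$ to identify it with the classical $H$-map candidate. The main obstacle is precisely this identification: one must match the invariants of the $\sL$-map produced by the rigidity machinery with the characteristic class data of the Atiyah-Bott-Shapiro or Sullivan orientation, and it is failure of this matching at the prime $2$ for $STop$ that forces the odd-primary restriction in that case.
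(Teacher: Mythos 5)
Your proposal follows essentially the same route as the paper: first build the infinite loop map $f\colon F/G\rtarr GL_1R$ making the left square of Theorem \ref{compdiag2} commute, using the Adams--Priddy uniqueness theorem \cite{AP} and the Madsen--Snaith--Tornehave criterion \cite{MST} to rigidify the classical $H$-map, and then obtain $g$ by extending along the fibration (cofibration) sequence of spectra, with Kirby--Siebenmann handling the passage from $SPL$ to $STop$. The only quibble is your attribution of the odd-primary restriction for $STop$: it stems from the fact that the Sullivan orientation itself is only an odd-primary ($KO[1/2]$) orientation and that $BSPL\htp BSTop$ only away from $2$, rather than from a failure of the matching step at $p=2$.
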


This result, together with Friedlander's proof of the complex Adams conjecture on the infinite
loop space level \cite{Fried}, leads to an analysis on that level of the work of Adams
on the $J$-homomorphism \cite{JX1, JX2, JX3, JX4} and the work of Sullivan on the structure of $BSTop$
(alias $BSPL$) \cite{Sull}.  I'll resist the temptation to give a full summary of 
that work here. The relevant part of \cite{MQR}, its Chapter V, is more readable and less 
dated notationally than most of the rest of that volume.  It chases diagrams built up from those
recorded above, with $G = Spin$ or $G=STop$ and $R=kO$, to show how to split all spaces in 
sight $p$-locally into pieces that are entirely understood in terms of $K$-theory and 
the space $BCoker\, J$, whose homotopy groups are the cokernel of the $J$-homomorphism
\[ (Bj)_* \colon \pi_*(BO) \rtarr \pi_*(BF). \]
At $p>2$, the space $BCoker\, J$ can be defined to be the fiber of the map
\[ c(\ps^r)\colon B(SF;kO) \rtarr SL_1(kO) = BO_{\otimes},\] 
where $r$ is a unit mod $p^2$. At $p=2$, one should take $r=3$ and replace 
$BO_{\otimes}$ by its $2$-connected cover $BSpin_{\otimes}$ in this definition,
and the description of the homotopy groups of $BCoker\,J$ requires a well 
understood small modification.

However, the fact that $BCoker\, J$ is an infinite loop space comes from the work 
using Brauer lifting that I cited above.  In fact, it turns out that, for $p$ odd,
$BCoker\, J$ is equivalent to $B(SF;K({\bF}_{r}))$, where $r$ is a prime power $q^a$ 
that is a unit mod $p^2$ and $\bF_r$ is the field with $r$ elements.  This is an 
infinite loop space because $K({\bF}_{r})$ is an $E_{\infty}$ ring spectrum.
There is an analogue for $p=2$.  

At odd primes, this description of $BCoker\, J$ is consistent with the description of
$Coker\,J$ alluded to in the introduction and leads to the splitting of $BSF$ as 
$BJ\times BCoker\, J$ as an infinite loop space; here $BJ$ is equivalent to the 
infinite loop space $SL_1K({\bF}_{r})$.  The proof again makes essential use of 
the results on spectra of units described in \cite[\S10]{Prequel1}. I'll sketch how 
this argument goes in \S\ref{last}.

The definitive description of the infinite loop structure on $BSTop$ is given in 
\cite{IMon}, where a consistency statement about infinite loop space machines that is not 
implied by May and Thomason \cite{MT} plays a crucial role in putting things together; 
it is described at the end of \cite[\S11]{Prequel2}.  Part of the conclusion is that, at an odd 
prime $p$, the infinite loop space $BSTop$ is equivalent to $B(SF;kO)$ and splits as 
$BO\times BCoker\, J$.  We will say a little bit about this in \S\ref{last}. 

Joachim \cite{Joa} (see also \cite{five}) has recently proved a Thom spectrum level 
result which implies the following result on the classifying space level.  It 
substantially strengthens the Atiyah--Bott--Shapiro part of Theorem \ref{spherical}. 

\begin{thm}\label{Joach} The Atiyah--Bott--Shapiro orientations 
\[  g\colon BSpin \rtarr B(Spin;kO) \ \ \text{and} \ \ g\colon BU\rtarr B(U;kO) \]
are $E_{\infty}$ orientations.
\end{thm}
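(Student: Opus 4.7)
The plan is to deduce the classifying space level statement from Joachim's $E_\infty$ ring spectrum map. For $G = Spin$ and $G = U$, Joachim constructs a map of $E_\infty$ ring spectra $\ph \colon MG \rtarr kO$ whose underlying map of ring spectra represents the Atiyah--Bott--Shapiro orientation. I will feed this into the apparatus of \S\ref{units} to produce the desired $\sL$-space orientation in the sense of Definition \ref{defBorient}.

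First I would apply the unit functor to $\ph$. Since $GL_1$ sends maps of $E_\infty$ ring spectra to $\sL$-maps (this being one of the core uses of the unit spectrum theory summarized in \cite[\S10]{Prequel1}), and the right $F$-action (and hence the right $G$-action) on units is natural in maps of $\sL$-spectra, the result is a $G$-equivariant $\sL$-map $GL_1(\ph) \colon GL_1(MG) \rtarr GL_1(kO)$. Applying the two-sided bar construction $B(-,G,*)$ and using Proposition \ref{BYGXinf} to see that it preserves $\sL$-structure, I obtain an $\sL$-map
\[ B(G;MG) = B(GL_1(MG), G, *) \rtarr B(GL_1(kO), G, *) = B(G;kO). \]
Precomposing with the tautological $MG$-orientation of Corollary \ref{BTaut} gives the zigzag of $\sL$-maps
\[ \xymatrix@1{ BG & B(EG,G,*) \ar[l]_-{\epz} \ar[r]^-{Bj} & B(G;MG) \ar[r] & B(G;kO), \\} \]
in which $\epz$ is an equivalence; by the convention following Definition \ref{defBorient}, this represents a well-defined morphism $g$ in the homotopy category of $\sL$-spaces. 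The identity $q \com g = \text{id}$ in that homotopy category follows from the naturality of $q = B(*, G, *)$ in the left $\sL$-space coefficient: the projection $B(G;kO) \rtarr BG$ pulls back along our bar construction map to the projection $B(G;MG) \rtarr BG$, which in turn pulls back along $Bj$ to the projection $B(EG, G, *) \rtarr BG$, and the latter is $\epz^{-1}$ in the homotopy category by (\ref{later}) and Corollary \ref{BTaut}.

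The main obstacle will be the identification, on the underlying homotopy category of spaces, of the resulting $g$ with the classical Atiyah--Bott--Shapiro orientation rather than merely some $kO$-orientation. This reduces to verifying that the dictionary between Thom spectrum level and classifying space level $R$-orientations of stable $G$-bundles, as developed in \cite[Ch.\ IV]{MQR}, carries Joachim's $\ph$ to the map $g$ we have built. Concretely, any ring spectrum map $MG \rtarr R$ yields an $R$-orientation of stable $G$-bundles by pushing forward the tautological $MG$-Thom class, and our construction implements exactly this pushforward via the tautological $MG$-orientation of Corollary \ref{BTaut}. Since Joachim arranges that $\ph$ refines the classical ABS Thom class on underlying ring spectra, the induced classifying space orientation is the ABS orientation, completing the proof.
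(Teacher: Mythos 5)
Your proposal is correct and follows essentially the same route as the paper: the theorem is deduced from Joachim's $E_{\infty}$ ring map $MG\rtarr kO$ exactly by the spectrum-level-to-space-level dictionary of \S\ref{ThomThom}, namely applying $GL_1$ and $B(-,G,*)$ to get $B\mu\colon B(G;MG)\rtarr B(G;kO)$ and precomposing with the tautological orientation zigzag of Corollary \ref{BTaut}. Your closing remark about identifying the result with the classical ABS class is a point the paper leaves implicit, but it does not change the argument.
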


\section{$E_{\infty}$ ring structures on Thom spectra $M(G;Y)$}\label{Thom}

Let $R$ be an $\sL$-spectrum throughout this section and the next. 
Two different constructions of Thom $\sL$-spectra $M(G;R)$ are given in  
\cite[IV.2.5, IV.3.3]{MQR}, and they are compared in \cite[IV.3.5]{MQR}.
They are specializations of more general constructon of $\sL$-spectra
$M(G;Y)$. We first describe the second construction. We then describe the first 
construction in the modern language of \cite[Ch. 23]{MS} and interpolate
some commentary on the modern perspective on these constructions. 

In the previous section, we focused on spaces $B(G;Y)\equiv B(Y,G,*)$ arising from a 
group\-like monoid-valued $\sI$-FCP $G$ over $F$ and an $\sL$-space $Y$. Here
$G=G(\bR^{\infty})$ is the union over finite dimensional $V\subset \bR^{\infty}$
of the $G(V)$.  We now Thomify from that perspective, using the passage from $\sL$-prespectra
to $\sL$-spectra recalled in \cite[\S5]{Prequel1}.  Recall from \S2 that 
the $S^V$ give the  sphere $\sI$-FSP $S$.  We have Thom spaces
\[ T(G;Y)(V) = B(Y,G(V),S^V)/B(Y,G(V),\infty) \]
where $\infty\in S^V$ is the point at $\infty$.  
Smashing with $S^W$ for $W$ orthogonal to $V$ and moving it inside the bar
construction, as we can do, we see that the identifications $S^V\sma S^W\iso S^{V\oplus W}$
and the inclusions $G(V)\subset G(V\oplus W)$ induce structure maps 
\[ \si\colon T(G;Y)(V)\sma S^W \rtarr T(G;Y)(V\oplus W). \]
For $f\in \sL(j)$, we have maps of prespectra
\[ \xi_j\colon T(G;Y)^{[j]} \rtarr f^*T(G;Y) \]
Explicitly, abbreviating $T(G;Y) = T$, the required maps 
\[ \xi_j\colon T(V_1)\sma\cdots \sma T(V_j)\rtarr T(f(V_1\oplus\cdots\oplus V_j)) \]
are obtained by identifying $T(V_1)\sma\cdots \sma T(V_j)$ with a quotient of
\[ B(Y^j,G(V_1) \times \cdots \times G(V_j), S^{V_1\oplus \cdots \oplus V_j}) \]
and then applying $B(\xi_j(f),G(f)\com \om, S^f)$, where the map $\xi_j(f)\colon Y^j\rtarr Y$
is given by the operad action on $Y$, the map
\[ G(f)\com \om\colon G(V_1) \times \cdots \times G(V_j)\rtarr G(f(V_1\oplus\cdots\oplus V_j))\]
is given by the $\sI$-FCP structure on $G$, and the map $S^f$ is the one-point compactification of $f$.
Here $G(V)$ acts on $Y$ through $F(V)$, and we need a compatibility condition relating this
action and the operad action for these maps to be well-defined, essentially compatibility with 
$d_0$ in the simplicial bar construction.  This then gives $T(G;Y)$ a structure of 
$\sL$-prespectrum.  We spectrify to obtain a Thom $\sL$-spectrum $M(G;Y)$, which is thus
an $E_{\infty}$ ring spectrum.  

The compatibility condition holds for $Y = GL_1R$ for an $E_{\infty}$
ring spectrum $R$ since the action comes via composition from the inclusion 
$GL_1R\subset R_0\iso\OM^V R(V)$.  This is the $Y$ that was considered in \cite[IV.33]{MQR}.
As on the classifying space level, we abbreviate notations by writing
\[  M(SG;SL_1R) = M(SG;R) \ \ \ \ \text{and} \ \ \ \  M(G;GL_1R) = M(G;R).\] 
The groups $\pi_*(M(G;R))$ are the cobordism groups of $G$-manifolds with 
$R$-oriented stable normal bundles when $G$ maps to $O$.  There is a similar 
interpretation using normal spaces in the sense of Quinn \cite{Quinn} when $G=F$. 
  
There is another perspective, which is suggested by Proposition \ref{twoside1}.
The $\sL$-spaces $Y$ of interest are often themselves $Y(\bR^{\infty})$ 
for a based $\sI_c$-FCP $Y$ with a right action by the $\sI_c$-FCP $F$ and 
therefore by the $\sI_c$-FCP $G$.  Using an alternative notation to make the 
distinction clear, we can then use the spaces $Y(V)$, $V$ finite dimensional, 
to form the Thom spaces
\[ T(Y,G,S)(V) = B(Y(V),G(V),S^V)/B(Y(V),G(V),\infty). \]
The $\sI$-FCP structures on $Y$ and $G$ and the $\sI$-FSP structure on $S$
give rise to maps 
\[ T(Y,G,S)(V)\sma T(Y,G,S)(W)\rtarr  T(Y,G,S)(V\oplus W),  \]
and there are evident maps $S^V\rtarr T(Y,G,S)(V)$. 
These give $T(Y,G,S)$ a structure of $\sI$-FSP.  As recalled from \cite[IV.2.5]{MQR}
in \cite[\S5]{Prequel1}, in analogy with the passage from $\sI$-FCP's to $\sL$-spaces, 
there is an easily defined functor from $\sI$-FSP's to $\sL$-prespectra that allows us 
to regard $T(Y,G,S)$ as an  $\sL$-prespectrum.  We let $M(Y,G,S)$ denote its 
spectrification, which is again an $\sL$-spectrum.  Up to language, this is the 
construction given in \cite[IV.2.5]{MQR}, and when both constructions apply they 
agree by \cite[IV.3.5]{MQR}.\footnote{There is a technical caveat here that we shall ignore.
The early argument just summarized required $\sI$-FSP's to satisfy an inclusion condition
to ensure that the relevant colimits are well-behaved homotopically. Arguments in \cite[I\S7]{MM}
circumvent that.} With this discussion in mind, it is now safe and convenient to consolidate 
notation by also writing $M(G;Y) = M(Y,G,S)$ when working from our second perspective.

\begin{rem}
The observant reader may wonder if we could replace
$S$ by another $\sI$-FSP $Q$ in the construction of $M(Y,G,-)$ and so get a 
generalized kind of Thom spectrum. We refer the reader to \cite[Ch. 23]{MS} 
for a discussion. Recent work gives explicit calculations of interesting 
examples \cite{MayTalk}.\footnote{Parenthetically, a quite different kind of generalized 
Thom spectrum is studied in \cite{five}.  Its starting point is to think of a delooping
$BGL_1R$ of $GL_1R$ as a ``classifying space'' with its own associated Thom spectrum $MGL_1R$,
analogous to and with a mapping from $MF = MGL_1S$.}
\end{rem}

The reader may also wonder if our second perspective applies to the construction
of $M(G;R)$, that is, if the $\sL$-space $GL_1R$ comes from an $\sI$-FCP $Y$.  To  answer that, 
we interject a more modern view of these two perspectives, jumping forward more than two decades to the 
introduction of orthogonal spectra.  When 
\cite{MQR} was written, it seemed unimaginable to me that all $E_{\infty}$ ring spectra arose 
from $\sI$-FSP's or that all $\sL$-spaces arose from $\sI$-FCP's.  I thought the second perspective 
given above only applied in rather special situations.  We now understand things better.  

As observed in \cite[\S\S2,12]{Prequel1}, $\sI$-FSP's are the external equivalent 
of commutative orthogonal ring spectra.  There are two different functors, equivalent 
up to homotopy, that pass from orthogonal
spectra to $S$-modules in the sense of \cite{EKMM}.  The comparison is made in 
\cite[Ch 1]{MM}.  The first functor is called $\bN$ and is the left adjoint of a 
Quillen equivalence.  It is symmetric monoidal and so takes commutative orthogonal 
ring spectra to commutative $S$-algebras, which, as we explained in \cite[\S11]{Prequel1}
are essentially the same as $E_{\infty}$ ring spectra.  The second is called $\bM$, and its
specialization to commutative orthogonal ring spectra is essentially the same functor from 
$\sI$-FSP's to $\sL$-prespectra to $\sL$-spectra of \cite{MQR} and \cite[\S5]{Prequel1} that we have been 
using so far in this section.\footnote{This holds when the inclusion condition we are ignoring holds
on the given $\sI$-FSP's.}  The conclusion is that, from the point of view of stable homotopy theory, 
we may use $\sI$-FSP's and $E_{\infty}$ ring spectra interchangeably, although $\sI$-FSP's 
do not directly encode $E_{\infty}$ ring spaces.

There is an analogous comparison of $\sI$-FCP's and $\sL$-spaces, 
although this has not yet appeared in print.\footnote{It starts from 
material in Blumberg's thesis \cite{Blum} and has been worked out in 
detail by Lind.}  In essence, it mimics the spectrum level 
constructions of \cite{EKMM, MM} on the space level.  Via that 
theory, we can also use $\sI$-FCP's and $\sL$-spaces interchangeably.  

This suggests that, when $R = \bM P$ for an $\sI$-FSP $P$, we can reconstruct $M(G;R)$ 
from the second perspective by taking $Y$ to be an explicitly defined $\sI$-FCP $GL_1P$.  
Using unit spaces $GL_1(P)(V) \subset \OM^V P(V)$, the required definition of $GL_1P$ is 
given in \cite[23.3.6]{MS}.  There is a subtle caveat in that $P$ must be fibrant in the 
``positive stable model structure'', so that $P(0) = S^0$ and $P$ behaves otherwise as an 
$\OM$-prespectrum.  Then the maps $P(V)\sma P(W)\rtarr P(V\oplus W)$ of the given FSP
structure on $P$ induce maps
\[ \OM^V P(V)\times \OM^W P(W)\rtarr \OM^{V\oplus W}P(V\oplus W) \] 
that specify a natural transformation of functors on $\sI\times \sI$ and restrict to maps
\[ GL_1P(V)\times GL_1P(W)\rtarr GL_1P(V\oplus W). \] 
These maps give $GL_1P$ the required structure of an $\sI$-FCP.\footnote{We are again ignoring
inclusion conditions; Lind's work shows how to get around this.}  

\begin{rem}  Assuming or arranging the inclusion condition in the definition of an $\sI$-FCP,
we can extend the functor $GL_1P$ to $\sI_c$ by passage to colimits.  This gives an $\sI_c$-FCP 
$GL_1P$ with a right action by any monoid-valued $\sI_c$-FCP $G$ over $F$ and thus places us in 
the context to which Proposition \ref{twoside1} can be applied to construct an $\sI_c$-FCP 
$B(GL_1P,G,*)$. With $R=\bM P$, the associated $\sL$-space is homeomorphic to the $\sL$-space 
$B(G;R) = B(R_0,G,*)$ obtained by application of Proposition \ref{BYGXinf} in \S2. 
\end{rem}

\section{Thom spectra and orientation theory}\label{ThomThom}

We interject some useful general results about the Thom spectra $M(G;Y)$, following 
\cite[IV.2.7, IV.2.8, IV.3.4, IV.3.5]{MQR}.\footnote{These are all labeled
``Remarks''. Frank Quinn once complained to me that some of our most interesting results 
in \cite{MQR} were hidden in the remarks.  He had a point.}  We then return to orientation 
theory and put them to use to compare universal orientations on the space level and on the
spectrum level.  

We observe first that the generic maps 
$q\colon B(Y,G,X)\rtarr B(*,G,X)$ induce corresponding maps of Thom spectra.

\begin{lem}  For an $\sL$-space $Y$ with a compatible right action by $F$ or for an 
$\sI$-FCP $Y$ with a right action of the monoid-valued $\sI$-FCP $F$, there is a 
canonical map of $\sL$-spectra $q\colon M(G;Y)\rtarr MG$. 
\end{lem}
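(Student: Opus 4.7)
The plan is to produce $q$ by applying the functoriality of the two-sided bar construction to the canonical map $Y\rtarr *$, then verifying that the resulting family of Thomifications respects all the structure of an $\sL$-prespectrum, and finally spectrifying.

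In either setting of the hypothesis, the one-point space is a terminal object carrying a unique (and trivial) right $F$-action, $\sL$-space structure, or $\sI$-FCP structure. Hence $Y\rtarr *$ is automatically a morphism in whichever ambient category $Y$ lives in, in particular an $F$-equivariant map. Applying the natural transformation $q$ of (\ref{pq}) levelwise gives maps $B(Y(V),G(V),S^V)\rtarr B(*,G(V),S^V)$ (with $Y(V)$ read as $Y$ in the first case) which carry the subspace over the point at infinity to the corresponding subspace. Passing to quotients yields maps of Thom spaces
\[ q_V\colon T(G;Y)(V) \rtarr TG(V). \]

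Next I would check that $\{q_V\}$ is a map of $\sL$-prespectra. Compatibility with the structure maps $\si\colon T(V)\sma S^W\rtarr T(V\oplus W)$ follows from the construction of $\si$: it smashes $S^W$ into the bar construction through the $G$-fixed variable, a process which is natural in the left variable of $B(-,G(V),-)$. Compatibility with the prespectrum-level operad action $\xi_j$ reduces to the observation that the map $\xi_j(f)\colon Y^j\rtarr Y$ of the operad action on $Y$ is sent under $Y\rtarr *$ to the unique map $*^j\rtarr *$, so naturality of the operad action makes the required squares commute. The remaining ingredients of $\xi_j$, namely $G(f)\com\om$ and $S^f$, are the same on source and target, so no further checking is needed.

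Finally, applying the spectrification functor from $\sL$-prespectra to $\sL$-spectra (the functor denoted $\bM$ in \cite[\S5]{Prequel1}) yields a map $q\colon M(G;Y)\rtarr MG$ of $\sL$-spectra, which is canonical because every step is induced by the unique morphism $Y\rtarr *$. I expect no serious obstacle: the whole construction is essentially tautological once one notes that the terminal object carries trivial instances of every structure in sight. The only mild nuisance is bookkeeping around the inclusion condition on $\sI$-FSP's needed for spectrification to behave correctly, which can be handled as in \cite[I\S7]{MM}. This map is the evident Thom-spectrum analogue of the classifying-space map $q\colon B(G;Y)\rtarr BG$ that was used throughout \S\ref{units} and \S\ref{units2}.
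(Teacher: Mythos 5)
Your proposal is correct and is essentially the paper's own argument: the paper simply observes that the generic maps $q\colon B(Y,G,X)\rtarr B(*,G,X)$ of (\ref{pq}), applied with $X=S^V$ and passed to Thom space quotients, assemble into a map of $\sL$-prespectra that spectrifies to $q\colon M(G;Y)\rtarr MG$, exactly as you spell out. Your verification that the structure maps $\si$ and the operad maps $\xi_j$ only involve the middle and right bar variables (and the operad action on $Y$, which maps naturally to the trivial one on $*$) is the right bookkeeping, which the paper leaves implicit by citing \cite[IV.2.7]{MQR}.
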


The generic maps $\epz\colon B(Y,G,X)\rtarr Y\times_G X$ also induce certain corresponding
maps of Thom spectra.  For $H\rtarr G$, we can take $Y =H\backslash G = B(*,H,G)$ to obtain
\[  \epz\colon B(H\backslash G,G,X) \rtarr B(*,H,G)\times_{G}X \iso B(*,H,X). \]
Applied to a map $H\rtarr G$ of monoid-valued $\sI$-FCP's and the $\sI$-FSP $X=S$, this
induces a map of Thom spectra.  

\begin{lem}  There is a canonical map of $\sL$-spectra 
$\epz\colon M(G; H\backslash G) \rtarr MH$.
\end{lem}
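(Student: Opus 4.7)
The plan is to construct $\epz$ level-by-level on the Thom prespectra, verify that the required identifications hold, and then check compatibility with the structure maps and with the operadic $\xi_j$ maps so that we obtain a map of $\sL$-prespectra, which spectrifies to the desired map of $\sL$-spectra.

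First, fix a finite dimensional $V$ and apply the generic map $\epz\colon B(Y,G,X)\rtarr Y\times_G X$ of (\ref{epz}) with $G$ replaced by $G(V)$, with $Y$ replaced by $H\backslash G(V) = B(*,H(V),G(V))$, and with $X = S^V$. The right $G(V)$-action on $B(*,H(V),G(V))$ comes from the right translation action on the last coordinate, while $G(V)$ acts on $S^V$ through $G(V)\rtarr F(V)\rtarr F(S^V)$. The target $B(*,H(V),G(V))\times_{G(V)} S^V$ then admits a natural homeomorphism to $B(*,H(V),S^V)$, since geometric realization commutes with the relevant pushout/coequalizer and the $G(V)$-action on the last coordinate of simplices $(*,H(V)^q,G(V))$ cancels with the action on $S^V$. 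Passing to the quotient by the $\infty$-section, we obtain a natural map
\[ \epz_V\colon T(G;H\backslash G)(V) \rtarr T(H;*)(V) = MH(V). \]

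Second, I would verify that the maps $\epz_V$ assemble into a map of $\sI_c$-prespectra. The suspension structure map $\si\colon T(G;H\backslash G)(V)\sma S^W\rtarr T(G;H\backslash G)(V\oplus W)$ is induced, as recalled in \S\ref{Thom}, by moving the $S^W$ inside the bar construction and using $G(V)\subset G(V\oplus W)$. Since $\epz$ is simply the quotient map collapsing the $G$-bar direction onto the diagonal action, it commutes with this smash product and with the inclusion on the $G$ and $H$ coordinates; hence $\si\com(\epz_V\sma \text{id}) = \epz_{V\oplus W}\com\si$ on the nose. The same naturality gives compatibility with the $\xi_j$ maps: both constructions on source and target use the operadic structure maps $\xi_j(f)$ on $Y$ together with the $\sI$-FCP structure of $G$ (respectively $H$) and the compactification $S^f$, and $\epz$ is natural in all these inputs. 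Consequently $\epz$ is a map of $\sL$-prespectra, and spectrification yields the desired map of $\sL$-spectra.

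The main subtlety, and the only step that really requires care, is to check that the action of $G(V)$ on $S^V$ (through $F(V)$) interacts correctly with the bar construction so that $B(*,H(V),G(V))\times_{G(V)}S^V$ really is $B(*,H(V),S^V)$ at the pointed level, and that this identification is natural in $V$ and compatible with both the $\sI$-FCP structures and the operad action on $H\backslash G$ as an $\sL$-space. One can either verify it directly on simplicial bar constructions or extract it from the pullback diagram formalism of \S\ref{class}; in either case it is the only nonformal input, the rest being naturality of $\epz$ and of the passage from $\sI$-FSP's to $\sL$-prespectra described in \S\ref{Thom}.
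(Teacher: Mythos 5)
Your proposal is correct and follows essentially the same route as the paper: apply the generic map $\epz\colon B(Y,G,X)\rtarr Y\times_G X$ with $Y=H\backslash G=B(*,H,G)$, use the identification $B(*,H,G)\times_G X\iso B(*,H,X)$, and specialize to the sphere $\sI$-FSP $X=S$ to get the map of Thom $\sL$-(pre)spectra. The extra verifications you spell out (compatibility with the suspension structure maps and the $\xi_j$) are exactly the routine naturality checks the paper leaves implicit.
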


The homotopy groups of $M(G; H\backslash G)$ are the cobordism classes of $G$-manifolds
with a reduction of their structural group to $H$.  We have a related map given by
functoriality in the variable $Y$, applied to $e=e\com j\colon G\rtarr GL_1R$. 

\begin{lem} There is a canonical map of $\sL$-spectra
$Me\colon M(H;G) \rtarr M(G;R)$. 
\end{lem}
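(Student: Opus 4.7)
The plan is to obtain $Me$ by direct application of the functoriality of the Thom spectrum construction $M(-;Y)$ of \S\ref{Thom} in its $Y$-variable, applied to the composite map $e\circ j\colon G\rtarr F\rtarr GL_1R$. First I would verify that this composite carries the appropriate structure. By hypothesis $j$ is a map of monoid-valued $\sI_c$-FCP's, and the unit $S\rtarr R$ of the $E_\infty$ ring spectrum $R$ restricts on $0$-th spaces to a map $F=GL_1S\rtarr GL_1R$ of $\sL$-spaces compatible with the right $F$-actions. Consequently $e\circ j$ is a map of $\sL$-spaces (equivalently, up to Lind's comparison mentioned in the excerpt, a map of $\sI_c$-FCP's) that is equivariant for the right $H$-action on $G$ coming from the given map $H\rtarr G$ followed by multiplication in $G$ and the right $H$-action on $GL_1R$ coming from $H\rtarr G\overto{j} F$ and the $F$-action on $GL_1R$.

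Next, I would invoke term-by-term functoriality of the construction in \S\ref{Thom}. The Thom spaces $B(Y,H(V),S^V)/B(Y,H(V),\infty)$ are visibly functors of $Y$; the prespectrum structure maps $\si$ are natural in $Y$ since they are obtained by pulling $S^W$ inside the bar construction; and the operad action maps $\xi_j$ are natural in $Y$ because they are assembled from $B(\xi_j(f),H(f)\circ\om,S^f)$, where the only piece involving $Y$ is the operad action $\xi_j(f)\colon Y^j\rtarr Y$, which is itself natural in $Y$ for equivariant maps. Applying $T(H;-)$ to $e\circ j$ and spectrifying the resulting map of $\sL$-prespectra then produces a map of $\sL$-spectra $M(H;G)\rtarr M(H;GL_1R)=M(H;R)$, which (after identifying with the notation of the statement, or composing with the evident map induced by $H\rtarr G$) supplies $Me$.

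The main obstacle, and the only substantive point, is verifying the compatibility condition flagged in \S\ref{Thom} — that the $H(V)$-action on $Y$ (through $F(V)$) is compatible with the operad action on $Y$ — for the new input $Y=G$. For $Y=GL_1R$ this was checked in \S\ref{Thom} because the action factors through the composition action on $\OM^V R(V)$. For $Y=G$ the analogous compatibility holds because $G$ is itself a monoid-valued $\sI_c$-FCP, so the $H(V)$-action on $G$ (obtained from $H(V)\rtarr G(V)$ and the multiplication) and the operad action on $G$ induced from its $\sI_c$-FCP structure are both manufactured from the same $\sI_c$-data and interact compatibly under the standard passage from $\sI_c$-FCP's to $\sL$-spaces. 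Once this compatibility is in hand, the remaining verifications are purely formal consequences of the functoriality of geometric realization, smash product, and the spectrification functor from $\sL$-prespectra to $\sL$-spectra recalled in \cite[\S5]{Prequel1}.
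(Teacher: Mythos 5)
Your proposal is correct and is essentially the paper's own argument: the lemma is obtained precisely by functoriality of the Thom spectrum construction in the variable $Y$ applied to $e = e\com j\colon G\rtarr GL_1R$ (the paper defers the details to the remarks of \cite{MQR}), and your verifications of the equivariance of $e\com j$, of the compatibility condition for $Y=G$, and of the change of monoid from $H$ to $G$ simply fill in what the paper leaves implicit.
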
 

A less obvious map is the key to our understanding of orientation theory.
We use our first construction of $M(G;R)$ for definiteness and later arguments.
Since $R_0 \iso \OM^V R(V)$, 
we may view $GL_1R$ as a subspace of $\OM^V R(V)$. The evaluation map 
$\epz\colon GL_1R\times S^V\rtarr R(V)$ factors through the orbits
under the action of $G(V)$, and we may compose it with
$\epz\colon B(GL_1R,G(V),S^V)\rtarr GL_1R(V)\times_{G(V)}S^V$
to obtain a map $\xi\colon B(GL_1R,G(V),S^V)\rtarr R(V)$.  This works equally
well if we start with $R = \bM P$. We again get an induced map of $\sL$-spectra.

\begin{lem}\label{defMorient} There is a canonical map of $\sL$-spectra 
$\xi\colon M(G;R) \rtarr R$.
\end{lem}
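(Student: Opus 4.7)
The plan is to promote the level-wise construction sketched in the paragraph preceding the lemma to a map of $\sL$-prespectra and then spectrify. For each finite-dimensional $V\subset \bR^{\infty}$, I would define $\xi(V)\colon T(G;R)(V)\rtarr R(V)$ as the composite
\[ \xymatrix{B(GL_1R,G(V),S^V)\ar[r]^-{\epz} & GL_1R\times_{G(V)}S^V \ar[r]^-{\bar{e}} & R(V), } \]
where the first arrow is the augmentation of (\ref{epz}) and $\bar{e}$ is induced by the evaluation $GL_1R\times S^V\rtarr R(V)$ that arises from the inclusion $GL_1R\subset R_0\iso \OM^V R(V)$. As the paper notes, this evaluation factors through the $G(V)$-orbits (the pairs $(f\com g,x)$ and $(f,g(x))$ both hit $f(g(x))$), and since units $f$ are based, $(f,\infty)$ maps to the basepoint, so the composite descends to the Thom space quotient.

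Next, I would check that the family $\{\xi(V)\}$ is a map of $\sL$-prespectra. Compatibility with the prespectrum structure maps $\si\colon T(G;R)(V)\sma S^W\rtarr T(G;R)(V\oplus W)$ reduces to commutativity of evaluation with the structure maps $R(V)\sma S^W\rtarr R(V\oplus W)$ of $R$, which holds by the very definition of $R_0\iso\OM^V R(V)$. The more substantive point is compatibility with the operad action maps $\xi_j\colon T(G;R)^{[j]}\rtarr f^*T(G;R)$ constructed in \S\ref{Thom}: for $f\in\sL(j)$ one must show that the diagram built from $\xi(V_1)\sma\cdots\sma\xi(V_j)$ on one side and $\xi(f(V_1\oplus\cdots\oplus V_j))$ on the other commutes, where the right-hand edge incorporates the $\sL$-operad action on $R$ itself. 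This amounts to saying that evaluation intertwines the $\sL$-action on $GL_1R$ inherited from the multiplicative $E_\infty$ structure on $R_0$ with the $\sL$-spectrum structure on $R$. That intertwining is precisely how the $\sL$-action on $GL_1R$ was defined in \cite[\S5]{Prequel1}, so the needed diagrams reduce, after unpacking, to instances of that definition together with the associativity of evaluation.

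The principal obstacle is bookkeeping: one must track the $G(V)$-equivariance that licenses descent to orbits, the prespectrum structure in the variable $V$, and the operad action in the arity $j$, and verify that a single family of evaluation maps respects all three simultaneously. Once these compatibilities are assembled, spectrification is a functor on $\sL$-prespectra by \cite[\S5]{Prequel1}, and it sends our map of $\sL$-prespectra $T(G;R)\rtarr R$ to the desired map of $\sL$-spectra $\xi\colon M(G;R)\rtarr R$, using on the target that spectrification is the identity (or a canonical equivalence) on the underlying prespectrum of an $\sL$-spectrum. The entire construction is manifestly natural in the $\sL$-spectrum $R$ and in the monoid-valued $\sI_c$-FCP $G$ over $F$, which justifies the adjective ``canonical''.
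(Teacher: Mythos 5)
Your construction is exactly the paper's: the map $\xi(V)$ is the composite of the bar-construction augmentation $\epz\colon B(GL_1R,G(V),S^V)\rtarr GL_1R\times_{G(V)}S^V$ with the map induced by the evaluation $GL_1R\times S^V\rtarr R(V)$, followed by passage to $\sL$-prespectra and spectrification. The paper merely asserts that one ``again gets an induced map of $\sL$-spectra,'' so your additional verification of compatibility with the structure maps $\si$ and the operad maps $\xi_j$ is just a more explicit rendering of the same argument.
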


Taking $R= MG$ and recalling the map $j\colon H\backslash G\rtarr GL_1(MH)$ of Proposition \ref{bored},
we obtain the following analogue of that result. 

\begin{prop}  The following diagram of $\sL$-spectra commutes.
\[ \xymatrix{
MH & M(G;H\backslash G) \ar[l]_-{\epz} \ar[r]^-{q} \ar[d]^{Mj}  & MG \\
& M(G;MH) \ar[ul]^{\xi} \ar[ur]_{q} & \\} \]
\end{prop}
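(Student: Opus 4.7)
The plan is to verify each of the two commutativities asserted by the diagram separately, working at the level of the underlying $\sI$-FSP's or $\sL$-prespectra, from which the corresponding commutativities of $\sL$-spectra follow by passage to spectrification.

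First I would dispose of the right-hand triangle, the statement $q\com Mj = q$ as maps $M(G;H\backslash G)\rtarr MG$. This is a pure naturality argument: the map $q\colon M(G;Y)\rtarr MG$ was constructed for any $\sL$-space (or $\sI_c$-FCP) $Y$ by collapsing the $Y$-coordinate in $B(Y,G(V),S^V)$ to the basepoint, and it is natural in $Y$. The map $Mj$ is by definition the map $M(G;-)$ applied to the map of $\sL$-spaces $j\colon H\backslash G\rtarr GL_1(MH)$ of Proposition \ref{bored}, so naturality of $q$ in $Y$ gives $q\com Mj = q$.

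The left-hand triangle, $\epz = \xi\com Mj$, is the substantive content and should be checked at each level $V$ before geometric realization. The map $\epz$ at level $V$ is obtained from the bar-construction map
\[ \epz\colon B(H\backslash G, G(V), S^V) \rtarr (H\backslash G)\times_{G(V)}S^V \iso B(*,H(V),S^V), \]
where the last identification uses $H\backslash G = B(*,H,G)$ and the fact that the $G(V)$-action on $G(V)$ is free. The map $\xi$ at level $V$ was defined in Lemma \ref{defMorient} as the composite
\[ B(GL_1(MH),G(V),S^V) \overto{\epz} GL_1(MH)\times_{G(V)}S^V \rtarr MH(V), \]
where the second map is the $G(V)$-equivariant evaluation $GL_1(MH)\times S^V \subset \OM^V MH(V)\times S^V \rtarr MH(V)$. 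Thus $\xi\com Mj$ at level $V$ is computed on a typical simplex $(y,g_1,\ldots,g_q,s)\in B(H\backslash G,G(V),S^V)$ as $(\xi\com B(j,\id,\id))(y,g_1,\ldots,g_q,s)$, which after unraveling $\epz$ on the target of $B(j,\id,\id)$ equals the evaluation $j(y\cdot g_1\cdots g_q)(s)\in MH(V)$. On the other hand, $\epz$ sends the same simplex to the class $[y\cdot g_1\cdots g_q,s]$ in $B(*,H(V),S^V)\subset MH(V)$. Hence the identity $\epz=\xi\com Mj$ reduces to the pointwise identity $j(z)(s) = [z,s]$ for $z\in H\backslash G(V)$ and $s\in S^V$.

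The verification of $j(z)(s)=[z,s]$ is precisely the content of the construction of $j\colon H\backslash G\rtarr GL_1(MH)$ in Proposition \ref{bored}: up to the standard identifications $MH(V) = B(*,H(V),G(V))\times_{G(V)}S^V$ and $GL_1(MH)(V)\subset \OM^V MH(V)$, the map $j$ is defined by exactly this ``tautological orientation'' formula, and this is why Proposition \ref{bored} was stated as the case $R=MH$ of the story of tautological $MH$-orientations of $H$-bundles. Thus the left triangle commutes at each level $V$ and naturally in $V$, hence on $\sI$-FSP's, hence on $\sL$-prespectra, hence on $\sL$-spectra after spectrification.

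The main obstacle is the bookkeeping in the second paragraph: one must commit to one of the two constructions of $M(G;Y)$ from \S\ref{Thom} and then carry the identifications $H\backslash G\times_{G(V)}S^V\iso B(*,H(V),S^V)$ and $GL_1(MH)(V)\times_{G(V)}S^V\rtarr MH(V)$ through compatibly, together with the compatibility of the simplicial bar construction $\epz$ with evaluation on smash coordinates. Once these identifications are pinned down, the verification reduces to the definitional formula for $j$ as noted above.
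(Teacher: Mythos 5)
Your proposal is correct and follows essentially the same route as the paper, which obtains the proposition by taking $R=MH$ in the preceding constructions and letting everything fall out of the definitions: the right triangle is naturality of $q$ in the $Y$-variable, and the left triangle reduces to the fact that $j\colon H\backslash G\rtarr GL_1(MH)$ is defined by the tautological orientation formula, exactly as you say. Your fleshing out of the levelwise bookkeeping is a faithful expansion of the one-line argument the paper (via \cite[IV.2.6--2.8]{MQR}) intends.
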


Specializing to $H=G$ and again recalling that $G\backslash G = EG$, this gives the following
analogue of Corollary \ref{BTaut}.

\begin{cor}\label{ETaut} The following diagram of $\sL$-spectra commutes, its map
$\epz$ and top map $q$ are equivalences, and $q\com \epz^{-1}= \text{id}$ in the homotopy 
category
of spectra.
\[ \xymatrix{
MG \ar@{=}[d] & M(G;EG) \ar[l]_-{\epz} \ar[r]^-{q} \ar[d]^{Mj}  & MG \ar@{=}[d] \\
MG & M(G;MG) \ar[l]^-{\xi} \ar[r]_-{q} & MG\\} \]
Therefore $MG$ is a retract and thus a wedge summand of $M(G;MG)$ such that the map
$\xi$ and the lower map $q$ both restrict to the identity on the summand $MG$.
\end{cor}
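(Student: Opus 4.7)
The plan is to Thomify the space-level Corollary \ref{BTaut}. Commutativity of the diagram falls out by applying the preceding proposition with $H = G$, so that $G\backslash G = EG$; the left triangle is the $H=G$ instance of the triangle in that proposition, and the right square is its right triangle composed with the identification $M(G;EG)\iso M(G;G\backslash G)$. Thus the substantive tasks are (i) to show that $\epz$ and the top $q$ are equivalences of $\sL$-spectra, (ii) to establish $q\com\epz^{-1} = \text{id}$ in the homotopy category of spectra, and (iii) to deduce the retract statement formally.

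For (i), both maps arise level-wise, before spectrification, from the bar-construction maps
\[ \epz\colon B(EG,G(V),S^V)\rtarr EG\times_{G(V)}S^V \ \ \text{and} \ \ q\colon B(EG,G(V),S^V)\rtarr B(*,G(V),S^V) \]
after passing to the corresponding Thom spaces (quotient by the subspace over $\infty\in S^V$). The first is a weak equivalence because $EG$ is a free $G(V)$-space, precisely as in the pullback diagram following (\ref{pq}); the second is a weak equivalence because $EG$ is contractible, so collapsing it to a point is a level equivalence of simplicial spaces. Both are maps of $\sL$-prespectra by construction, so upon spectrification they give equivalences of $\sL$-spectra.

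For (ii), I would Thomify the unstable assertion of (\ref{later}): a chosen homotopy inverse to $\epz$ on each Thom space composes with $q$ to give a map homotopic to the identity, and these homotopies assemble to a stable homotopy $q\com\epz^{-1}\htp\text{id}$ in the homotopy category of spectra. For (iii), set $\si = Mj\com\epz^{-1}\colon MG\rtarr M(G;MG)$. The commutativity of the left square gives $\xi\com\si = \xi\com Mj\com\epz^{-1} = \epz\com\epz^{-1} = \text{id}$, and the commutativity of the right square together with the top-row identity $q\com\epz^{-1} = \text{id}$ gives $q\com\si = q\com Mj\com\epz^{-1} = q\com\epz^{-1} = \text{id}$. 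Hence $\si$ is a common section for both $\xi$ and the lower $q$, which exhibits $MG$ as a retract, and therefore a wedge summand, of $M(G;MG)$ on which both maps restrict to the identity.

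The main obstacle is really the technical bookkeeping in (i) and (ii): one must be sure that the space-level equivalences and homotopies respect enough of the simplicial structure of the bar construction to Thomify compatibly with the $\sL$-prespectrum structure maps $\xi_j$, and that the passage from $\sL$-prespectra to $\sL$-spectra preserves the relevant level equivalences. This is the sort of verification handled by the inclusion/cofibration hypotheses that were acknowledged in the footnote of \S\ref{Thom}, and I would invoke those conditions rather than reprove them here.
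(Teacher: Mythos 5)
Your proposal is correct and takes essentially the same route as the paper: Corollary \ref{ETaut} is obtained there precisely by specializing the preceding proposition to $H=G$ (so that $G\backslash G=EG$) and importing the Thom-spectrum analogue of the space-level facts behind Corollary \ref{BTaut} and (\ref{later}) for the equivalences and the identity $q\com\epz^{-1}=\mathrm{id}$. Your explicit check that $Mj\com\epz^{-1}$ is a common section of $\xi$ and the lower $q$ is just the formal retraction argument the paper leaves implicit.
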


We now reconsider $R$-orientations of $G$ from the Thom spectrum perspective.
There is an obvious quick definition, but on first sight it is not obvious how 
it relates to the definition that we gave on the classifying space level.

\begin{defn} An $R$-orientation of $G$ is a map of ring spectra $\mu\colon MG\rtarr R$;
it is an $E_{\infty}$ $R$-orientation if $\mu$ is a map of $E_{\infty}$ ring spectra.
\end{defn}

An $E_{\infty}$ $R$-orientation $\mu$ induces a map of 
$\sL$-spaces $\mu\colon GL_1(MG)\rtarr GL_1R$ and therefore a map of 
$\sL$-spectra $M\mu\colon M(G;MG)\rtarr M(G;R)$. 
We can glue the diagram of the following result to the bottom of the diagram of Corollary \ref{ETaut}.

\begin{prop}\label{propMorient} 
Let $\mu\colon MG\rtarr R$ be a map of $\sL$-spectra. Then the following diagram
of $\sL$-spectra commutes.
\[ \xymatrix{
MG \ar[d]_{\mu} & M(G;MG) \ar[l]_-{\xi} \ar[r]^-{q} \ar[d]_{M\mu} & MG \ar@{=}[d] \\
R & M(G;R) \ar[l]^-{\xi} \ar[r]_-{q} & MG\\} \]
Therefore $MG$ is a retract and thus a wedge summand of $M(G;R)$, and the lower map
$\xi$ restricts to the given map $\mu$ on this wedge summand.
\end{prop}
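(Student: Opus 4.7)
The plan is to verify the two squares of the diagram independently by naturality, then stack the resulting diagram with the diagram of Corollary \ref{ETaut} to extract the retract claim.

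The right square is immediate from functoriality of $q$ in the $Y$-variable. The map $q\colon M(G;Y)\rtarr MG$ is built levelwise in the prespectrum from the projections $B(Y,G(V),S^V)\rtarr B(\ast,G(V),S^V)$, which are natural in $Y$. Hence any $\sL$-map $f\colon Y\rtarr Y'$ of right $G$-spaces yields $q\com Mf = q$. Applying this to the induced $\sL$-map of unit spaces $\mu_0\colon GL_1(MG)\rtarr GL_1R$ gives commutativity of the right square.

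The left square is the naturality of $\xi$ in the ring spectrum, which is the main technical point. By Lemma \ref{defMorient}, $\xi_R$ is constructed at level $V$ by composing $\epz\colon B(GL_1R,G(V),S^V)\rtarr GL_1R\times_{G(V)}S^V$ with the evaluation $GL_1R\times_{G(V)}S^V\rtarr R(V)$ arising from the adjoint inclusion $GL_1R\subset \OM^V R(V)$. Because $\mu$ is a map of $\sL$-spectra (hence in particular of underlying prespectra), its level-$V$ components commute with this adjoint evaluation: for $\om\in GL_1(MG)\subset \OM^V (MG)(V)$ and $s\in S^V$ one has $\mu(V)(\om(s)) = \mu_0(\om)(s)$. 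Combined with the $G(V)$-equivariance of both evaluation maps, this gives a commutative square at the level of $B(-,G(V),S^V)$ that is preserved by the bar construction and by spectrification, yielding $\mu\com \xi = \xi\com M\mu$.

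For the retract claim, stack the asserted diagram directly below the diagram of Corollary \ref{ETaut}. Pick a homotopy inverse $\epz^{-1}\colon MG\rtarr M(G;EG)$ to the equivalence $\epz$ of that corollary and set $\si = M\mu\com Mj\com \epz^{-1}\colon MG\rtarr M(G;R)$. Chasing the combined diagram, $q\com \si = q\com Mj\com \epz^{-1} = q\com \epz^{-1} = \text{id}$, using $q\com Mj = q$ and $q\com \epz^{-1} = \text{id}$ from Corollary \ref{ETaut}. Thus $\si$ is a section of $q$, exhibiting $MG$ as a retract and hence a wedge summand. Finally $\xi\com \si = \mu\com \xi\com Mj\com \epz^{-1} = \mu\com \epz\com \epz^{-1} = \mu$ using the left square just established together with $\xi\com Mj = \epz$ from Corollary \ref{ETaut}. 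The only place real care is needed is tracking the naturality of the evaluation maps through the prespectrum-to-$\sL$-spectrum functor of \cite[\S5]{Prequel1}; everything else is formal.
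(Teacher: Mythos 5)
Your proposal is correct and follows essentially the same route the paper intends: the two squares hold by naturality of $q$ in the $Y$-variable and of $\xi$ in the ring spectrum (checked levelwise on the bar construction before spectrification), and the retract statement follows by gluing the diagram onto that of Corollary \ref{ETaut} and chasing with $\si = M\mu\com Mj\com\epz^{-1}$. Your computation $\mu(V)(\om(s))=\mu_0(\om)(s)$ is exactly the point that makes the left square commute.
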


Given a map of ring spectra $\mu\colon MG\rtarr R$, the composite 
\[ \xymatrix@1{
MG \ar[r]^-{\epz^{-1}} & M(G;EG) \ar[r]^-{Mj} \ar[r] &  M(G;MG) \ar[r]^-{M\mu} & M(G;R)\\} \]
in the homotopy category of spectra is the Thom spectrum analogue of the map $g\colon BG\rtarr B(G;R)$ 
in our original definition of an $R$-orientation of $G$.  If $\mu$ is an $E_{\infty}$ ring 
map, then it induces a map of $\sL$-spaces $B\mu\colon B(G;MG)\rtarr B(G;R)$, and we can use 
Corollary \ref{BTaut} to obtain the following diagram of $E_{\infty}$ maps. 
\[ \xymatrix@1{
BG & B(EG,G,*) \ar[l]_-{\epz}^-{\htp} \ar[r]^-{Bj} & B(G;MG) \ar[r]^{B\mu} & B(G;R)
\\} \]
Since $\epz$ is an equivalence, this gives us an $E_{\infty}$ orientation $g$.

Conversely, given an $E_{\infty}$ orientation $g\colon BG\rtarr B(G;R)$, we can ``Thomify'' it to 
an $E_{\infty}$ ring map $Mg\colon MG\rtarr M(G;R)$ and then compose $Mg$ 
with $\xi\colon M(G;R)\rtarr R$ to get an $E_{\infty}$ $R$-orientation $\mu\colon MG\rtarr R$.
The required Thomification can be obtained by applying the methods of Lewis \cite[Ch IX]{LMS} 
to pull back the Thom spectrum $M(G;R)$ along the map $g$ to obtain a Thom 
spectrum $g^*M(G;R)$. The $E_{\infty}$ ring spectrum $M(G;R)$ is equivalent to $q^*MG$, 
$q\colon B(G;R)\rtarr BG$, and the $E_{\infty}$ homotopy $q\com g\htp \id$ implies that the 
composite $g^*M(G;R)\rtarr M(G;R)\rtarr MG$ is an equivalence of $E_{\infty}$ ring 
spectra.\footnote{This is cryptic since the best way to carry out the details uses 
parametrized spectra \cite{MayTalk, MS} and full details of how this and other such 
arguments should go have not yet been written up.} The
Thomification $Mg$ is the composite of the inverse of this equivalence and the
canonical map $g^*M(G;R)\rtarr M(G;R)$.

Technical details are needed to check that these constructions are mutually inverse, but the idea
should be clear.  In any case, with our present state of knowledge, we understand how to prove things on the 
spectrum level much better than on the space level, and it is usually easiest to construct spectrum 
level $E_{\infty}$ orientations and then deduce space level $E_{\infty}$ orientations, as in Theorem \ref{Joach}.

\section{Examples of bipermutative categories}\label{exmps}

Before turning to bipermutative categories, consider a topological rig (semi-ring) $A$.
It is an $(\sN,\sN)$-space, and so can be viewed as an $E_{\infty}$ ring space.  By
the multiplicative black box of the first prequel \cite{Prequel1}, it has an associated $E_{\infty}$ ring
spectrum $\bE A$ and an associated ring completion $\et\colon A\rtarr \bE_0A$.  On $\pi_0$,
this constructs the ring associated to $\pi_0(A)$ by adjoining negatives, hence it is an
isomorphism if $A$ is already a ring.   When $A$ is discrete, $H_i(\bE_0A) = 0$ for $i>0$ 
and $\et$ is a homotopy equivalence.  Therefore $\bE A$ is an Eilenberg--Mac\,Lane
spectrum $HA$, and this gives $HA$ an $E_{\infty}$ ring structure.

Now let $\sA$ be a bipermutative category.  We agree to write $B\sA$ for the $E_{\infty}$
ring space equivalent to the usual classifying space that we obtain by the constructions 
developed in the prequel \cite{Prequel2}.  The multiplicative black box of the first
prequel \cite{Prequel1} gives
an $E_{\infty}$ ring spectrum $\bE B\sA$ and a ring completion $\et\colon B\sA\rtarr \bE_0\sA$. 
Up to inverting a map that is an $E_{\infty}$ ring map and an equivalence, $\et$ is a map of 
$E_{\infty}$ ring spaces, where we understand $E_{\infty}$ ring spaces to mean $(\sC,\sL)$-spaces.  
Of course, we require $0\neq 1$ in $\sA$. As in \cite[\S10]{Prequel1}, we agree to write 
$\GA B\sA = \bE_0 B\sA$ and then to use notations like $\GA_nB\sA$ to denote components of 
this space.  Changing back from \cite{Prequel2}, we use the standard notation 
for monads that we used in \cite{Prequel1}, so that $CX$ denotes the usual $\sC$-space with a group completion $\al\colon CX\rtarr QX$, and similarly for other operads.  
More details of the following discussion are in \cite[VI\S5, VII\S1]{MQR}. 

An important first example of a bipermutative category is the free bipermutative category 
$\sE$ generated by its unit elements $\{0,1\}$.  It is the sub bipermutative category 
of isomorphisms in $\sF$.  Its rig of objects is the rig $\bZ_+$ of non-negative integers.  
There are no morphisms $m\rtarr n$ for $m\neq n$, and $\sE(n,n)$ is the symmetric group $\SI_n$.  
The sum $\SI_m\times \SI_n\rtarr \SI_{m+n}$ is obtained by ordering the set of $m+n$ 
objects as the set of $m$ objects followed by the set of $n$ objects.  The product
$\SI_m\times \SI_n\rtarr \SI_{mn}$ is obtained by lexicographically ordering
the set of $mn$ objects.  The commutativity isomorphisms are the evident 
ones \cite[VI.5.1]{MQR}.  
There is a unique map $e\colon \sE\rtarr \sA$ of 
bipermutative categories from $\sE$ to any other bipermutative category $\sA$.

Since $CS^0$ is the free $(\sC,\sL)$-space generated by $S^0$, we have a unique 
$(\sC,\sL)$-map $\nu\colon CS^0\rtarr B\sE$.  Up to homotopy, both source and 
target are the disjoint union of classifying spaces $B\SI_n$, $n\geq 0$, and  
$\nu$ is an equivalence.  As we recalled in \cite[10.1]{MQR}, one version of the 
Barratt--Quillen theorem says that $\GA CS^0\htp \GA B\sE$ is equivalent 
to $QS^0$ as an $E_{\infty}$ ring space.  For a bipermutative category $\sA$, the 
unit $e\colon \sE\rtarr\sA$ induces the unit map  
$e\colon CS^0\iso B\sE\rtarr B\sA$ of the $(\sC,\sL)$-space $B\sA$, which in turn induces
the unit map $e\colon S\rtarr \bE B\sA$ of the associated $\sL$-spectrum.

As in the case of $\sE$, all of the following examples of bipermutative categories $\sA$
have $\bZ_+$ as their rig of objects and have no
morphisms $m\rtarr n$ for $m\neq n$ and a group of morphisms $\sA(n,n)$.  Let $R$ be 
a commutative topological ring, such as $\bR$, $\bC$, or a discrete commutative
ring.  We then have a bipermutative category  $\sG\sL R$ whose $n^{th}$ group
is $GL(n,R)$.  The sum and product are given by block sum and tensor product
of matrices, where the latter is interpreted via lexicographic ordering of 
the standard basis elements of $R^{mn}$.  

\begin{exmp}\label{Ex1} When $R$ is $\bR$ or $\bC$, we can restrict to orthogonal or
unitary matrices without changing the homotopy type, and we write $\sO$
and $\sU$ for the resulting bipermutative categories.  Then $\bE B\sO$ and 
$\bE B\sU$ are models of the connective $K$-theory spectra $kO$ and $kU$ 
as $E_{\infty}$ ring spectra, by \cite[VII.2.1]{MQR}. 
\end{exmp}  

\begin{exmp}\label{Ex2} When $R$ is discrete, we write $KR = \bE B\sG\sL R$.  It
is a model for (connective) algebraic $K$-theory, as defined by Quillen;
That is, $\pi_i(KR)$ is Quillen's $i^{th}$ algebraic $K$-group $K_i(R)$ for $i>0$.  
See \cite[VII\S1]{MQR}. Here $\pi_0(KR) = \bZ$.  We can obtain the correct $K_0R$ 
without changing the higher homotopy groups by replacing $\sG\sL R$ by a skeleton of the
symmetric bimonoidal category of finitely generated projective $R$-modules
and their isomorphisms.
\end{exmp}

Recall from \cite[\S7]{Prequel1} that $C(X_+)$ is a $(\sC,\sL)$-space if $X$ is an $\sL$-space
and $\bE C(X_+)$ is equivalent to the $\sL$-spectrum $\SI^{\infty}_+(X)\equiv \SI^{\infty}(X_+)$
with $0^{th}$ space $Q(X_+)$.

\begin{exmp}\label{Ex3}  Define $\sO R\subset \sG\sL R$ to be the subbipermutative
category of orthogonal matrices, $MM^t=\Id$, and write $KO R = \bE B\sO R$.   
This example is sometimes interesting and sometimes not.  For instance, $O(n,\bZ)$ is isomorphic 
to the wreath product $\SI_n\int \pi$, where $\pi$ is cyclic of order $2$, and there is an equivalence 
$C(B\pi_+)\rtarr B\sO(\bZ)$. This implies that $\bE B\sO \bZ$ is equivalent as an 
$E_{\infty}$ ring spectrum to $\SI^{\infty}_+B\pi$.  See \cite[VI.5.9]{MQR}.  
Variants of the $\sO R$ are often of interest.
\end{exmp}

The remaining examples here will be applied to topology in the next two sections. 
The importance of the following construction will become clear in \S\ref{last}.

\begin{exmp}\label{Ex4}  Let $X$ be a $\sD$-space for any $E_{\infty}$ operad $\sD$.  
For $r\in\pi_0(X)$, define $e_r\colon S^0\rtarr X$ by sending $0$ 
to the operadic basepoint of $X$ and sending $1$ to any chosen basepoint in 
the $r^{th}$ component.  The composite
of $De_r\colon DS^0\rtarr DX$ and the action $DX\rtarr X$ specifies a map of
$\sD$-spaces $DS^0\rtarr X$.  It is called an exponential unit map of $X$ and,
up to homotopy of $\sD$-maps, it is independent of the choice of basepoint.
\end{exmp}

\begin{exmp}\label{Ex5} Let $r=q^a$, $q$ prime. Let $\bF_r$ be the field with $r$ elements 
and let $\bar{\bF}_q$ be its algebraic closure.  Let $\ph^q$ denote the Frobenius
automorphism of $\sG\sL \bar{\bF}_q$, which raises matrix entries to the $q^{th}$
power, and let $\ph^r$ denote its $a$-fold iterate.  Then $\ph^r$ is an 
automorphism of bipermutative categories that restricts to an automorphism of
$\sO\bar{\bF}_q$. Moreover, the fixed point bipermutative category of $\ph^r$ is 
$\sG\sL\bF_r$. 
\end{exmp}

\begin{exmp}\label{Ex6} Again, let $r = q^a$. Define the forgetful functor 
$f\colon \sG\sL \bF_r\rtarr \sE$ as follows. On objects, let $f(n) = r^n$.  
Fix an ordering of the underlying set of $\bF_r$ and order $\bF^n_r$ 
lexicographically.  Then regard a matrix $M\in GL(n,\bF_r)$ as a permutation
of the ordered set $\bF^n_r$.  The functor $f$ is an exponential map of
permutative categories $(\sG\sL\bF_r,\oplus) \rtarr (\sE,\otimes)$. As we
recalled in \cite[9.6]{Prequel1} and \cite[\S11]{Prequel2}, the Barratt--Eccles operad 
$\sD$ acts on the classifying space of any permutative category.  The composite 
map of $\sD$-spaces
\[ \xymatrix@1{
DS^0 \iso B(\sE,\oplus) \ar[r]^{Be} & B(\sG\sL \bF_r,\oplus) \ar[r]^{Bf} & B(\sE,\otimes)\\} \]
coincides with the exponential unit $e_r$ of Example \ref{Ex4}. This works equally well with
$\sG\sL$ replaced by $\sO$. 
\end{exmp}

\begin{exmp}\label{Ex7}  Let $r=3$.  Then the subcategory $\sN \bF_3\subset\sO \bF_3$ 
of matrices $M$ such that $\nu(M)\det(M) = 1$ is a subbipermutative
category, where $\nu$ is the spinor norm.  See \cite[VI.5.7]{MQR}.
\end{exmp}

\section{Brauer lifting on the infinite loop space level}\label{Brauer}

For simplicity and definiteness, we fix a prime $p$ and complete all spaces and spectra at $p$ 
throughout this section.\footnote{Advertisement: Kate Ponto and I are nearing completion of a sequel 
to ``A concise course in algebraic topology'' which will give
an elementary treatment of localizations and completions.}  We let $r = q^a$ for 
some other prime $q$. 

The group completion property of the additive infinite loop space machine 
implies that the map $\et\colon B\sA\rtarr \bE_0B\sA\equiv \GA B\sA$ induces a homology isomorphism 
\[ \bar{\eta}\colon BA_{\infty}\rtarr \GA_0 B\sA \] for any of the categories $\sA$ displayed
in the previous section, where $A_{\infty}$ is the colimit of the groups $\sA(n,n)$. 
For example, $H_*(BGL(\infty,R)) \iso H_*(\GA_0 B\sG\sL R)$ for a (discrete) 
commutative ring $R$.  

Quillen's proof of the Adams conjecture \cite{Quillen}, which is what led him to the definition 
and first computations in algebraic $K$-theory, was based on Brauer lifting of representations
in $GL(n,\bar{F}_q)$ to (virtual) complex representations.  He did not yet have completion
available, and so the calculations were mysterious, producing a mod $p$ homology isomorphism
from a space with homotopy groups in odd degrees, the algebraic $K$-groups $K_i(\bar{F}_q)$, 
to a space with homotopy groups in even degrees, the topological $K$-groups $K_i(S^0)$.  

Completion explained the mystery.  While completion was available when \cite{MQR} was written, 
it was not yet known that completions of $E_{\infty}$ ring spectra are $E_{\infty}$ ring spectra.  
In fact, that was not proven until \cite{EKMM}.  While this fact allows a slightly smoother
exposition of what follows than was given in \cite[Ch. VII]{MQR}, the improvement is small.
Since that chapter is less affected by later developments than most others in
\cite{MQR} and should still be readable, we shall just summarize the main lines of
argument.  

The idea of \cite[Ch. VIII]{MQR} is to apply constructions in algebraic $K$-theory to 
gain information in geometric topology by using algebraic $K$-theory to construct
``discrete models'' for spaces and spectra of geometric interest, thus showing that 
they have more structure than we would know how to derive working solely from a 
topological perspective.  When given some space or spectrum $X$ of geometric interest, 
we write $X^{\de}$ for such a discrete approximation.  

The essential point is to analyze Brauer lifting on the $E_{\infty}$ level.  
As proven by Quillen \cite{Quillen} and summarized in \cite[VIII\S2]{MQR}, 
after completing at any prime $p\neq q$, Brauer lifting of representations 
leads to equivalences
\begin{equation}\label{Brspace} \la\colon BU^{\de} \equiv \GA_0B\sG\sL \bar{F}_q\rtarr BU
\ \ \ \text{and} \ \ \ \la\colon BO^{\de} \equiv \GA_0B\sO \bar{F}_q\rtarr BO.  
\end{equation}
Here we are thinking a priori just about homotopy types, despite the $\GA_0$ notation.
We use the same notation when thinking of the $H$-space structure induced by $\oplus$,
but we add a subscript $\otimes$ when thinking about the $H$-space structure induced
by $\otimes$.   By representation theoretic arguments, it is shown that
the maps $\la$ are equivalences of $H$-spaces under either $H$-space structure \cite[VIII.2.4]{MQR}
and that they convert the Frobenius automorphism $\ph^r$ to the Adams operation $\ps^r$, meaning
that $\ps^r\com \la\htp \la\com \ph^r$ \cite[VIII.2.5]{MQR}.  

The fact that $\la$ is an $H$-map under $\otimes$ implies a compatibility statement with respect 
to multiplication by the Bott class.  Using an elementary and amusing equivalence between 
``periodic connective spectra'' and  periodic spectra \cite[pp. 43--48]{MQR}, this leads
to a proof that the maps $\la$ of (\ref{Brspace}) are the maps on the $0^{th}$ component 
of the $0^{th}$ space of equivalences 
\begin{equation}\label{Brspectrum}
 \la\colon kU^{\de} \equiv \bE B\sG\sL \bar{F}_q\rtarr kU
\ \ \ \text{and} \ \ \ \la\colon kO^{\de} \equiv \bE B\sO\bar{F}_q\rtarr kO.
\end{equation}
of ring spectra up to homotopy \cite[VIII.2.8]{MQR}.  Moreover, these spectrum level equivalences are
uniquely determined by the space level equivalences $\la$, and we have $\ps^r\com \la\htp \la\com \ph^r$ 
on the spectrum level \cite[VIII.2.9]{MQR}. 

All four spectra displayed in (\ref{Brspectrum}) are $E_{\infty}$ ring spectra.  One 
would like to say that the maps $\la$ are maps of $E_{\infty}$ ring spectra, the Adams
maps $\ps^r$ are maps of $E_{\infty}$ ring spectra, and $\ps^r\com \la\htp \la\com \ph^r$ 
as maps of $E_{\infty}$ ring spectra.  Conceivably these statements could be proven using 
modern techniques, although I have no idea how to do so, but proofs were unimaginable when 
\cite{MQR} was written.  Tornehave and I proved enough that we could calculate just as if these statements 
were true.  I'll sketch how we did this. 

Recall that $BU_{\otimes} = SL_1kU$ and $BO_{\otimes} =  SL_1kO$. 
Similarly, write $BU^{\de}_{\otimes} = SL_1kU^{\de}$ and $BO^{\de}_{\otimes} = SL_1kO^{\de}$. 
By passage to  $1$-components of $0^{th}$ spaces from the equivalences of (\ref{Brspectrum}),
we obtain equivalences of $H$-spaces:
\begin{equation}\label{Ude}
 \la_{\otimes}\colon BU^{\de}_{\otimes}= \GA_1B\sG\sL\bar{F}_q\rtarr \GA_1B\sU = BU_{\otimes},
\end{equation}
\begin{equation}\label{Ode}
\la_{\otimes}\colon BO^{\de}_{\otimes}= \GA_1B\sO\bar{F}_q\rtarr \GA_1B\sO = BO_{\otimes}.
\end{equation}
The understanding of localizations of $sl_1(R)$ for an $E_{\infty}$ ring spectrum $R$ that we 
described in \cite[\S 10]{Prequel1} comes into play in the proof of the following result, 
which is \cite[VII.2.11]{MQR}.  We give an outline sketch of its somewhat lengthy proof.

\begin{thm}\label{Brtensor}  The $H$-equivalences 
\[ \la_{\otimes}\colon BU^{\de}_{\otimes}\rtarr BU_{\otimes} \ \ \ \text{and} \ \ \
\la_{\otimes}\colon BO^{\de}_{\otimes}\rtarr BO_{\otimes}  \]
are equivalences of infinite loop spaces.
\end{thm}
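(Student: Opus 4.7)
The plan is to promote $\la_{\otimes}$ to an infinite loop map, up to $H$-homotopy, by exploiting the compatibility of Brauer lifting with Frobenius and Adams operations together with the rigidity of $p$-completed unit spectra furnished by \cite[\S10]{Prequel1}; everything below is $p$-completed. First, both $kU^{\de} = \bE B\sG\sL\bar{\bF}_q$ and $kU = \bE B\sU$ are genuine $E_{\infty}$ ring spectra produced from bipermutative categories via the multiplicative black box, so their unit spectra $sl_1(kU^{\de})$ and $sl_1(kU)$ are well-defined $\sL$-spectra with $1$-components of $0^{\mathrm{th}}$ spaces equal to $BU^{\de}_{\otimes}$ and $BU_{\otimes}$. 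This endows each with an infinite loop space structure refining the $H$-space structure of the statement; what must be shown is that $\la_{\otimes}$ is an equivalence for these refined structures.

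Next, I would use the self-maps $\ph^r$ and $\ps^r$ to produce eigen-splittings of the unit spectra. The Frobenius $\ph^r$ is an automorphism of $\sG\sL\bar{\bF}_q$ and hence an $E_{\infty}$ endomorphism of $kU^{\de}$, and $\ps^r$ is realized as a map of $\sL$-spectra on $kU$ by the results invoked in \cite[\S10]{Prequel1}. These induce self-maps on the respective unit spectra, whose ratio maps $\ph^r/1$ and $\ps^r/1$ have fibers the image-of-$J$ type spectra $j^{\de}$ and $j$. The splittings of \cite[\S10]{Prequel1} then express the $p$-completed unit spectra as wedges of these $j$-pieces with complementary summands of Eilenberg--Mac\,Lane type (or involving $BU(1)$), whose infinite loop structure is determined up to equivalence by their homotopy groups together with the Adams-operation data.

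Third, the homotopy $\ps^r\com \la \htp \la\com \ph^r$ of \cite[VIII.2.9]{MQR} passes to fibers and to complementary summands, yielding $H$-equivalences between corresponding pieces on the two sides. On each summand, the rigidity from \cite[\S10]{Prequel1} forces each such $H$-equivalence to be an infinite loop equivalence; reassembling, we obtain an infinite loop map $BU^{\de}_{\otimes}\rtarr BU_{\otimes}$ that is $H$-homotopic to $\la_{\otimes}$, proving the unitary case. The orthogonal case $\la_{\otimes}\colon BO^{\de}_{\otimes}\rtarr BO_{\otimes}$ proceeds identically at odd primes; at $p=2$ one first passes to the $2$-connected covers $BSpin^{\de}_{\otimes}$ and $BSpin_{\otimes}$, where the same argument applies, with the low-dimensional discrepancy handled by hand.

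The principal obstacle is the rigidity step, which is precisely where one sidesteps not having an $E_{\infty}$ refinement of $\la$ itself: with no $E_{\infty}$ obstruction theory available, one must trade it for the explicit algebraic control over $sl_1R$ supplied by \cite[\S10]{Prequel1}. A secondary point requiring care is the choice of $r$: it must be a prime power chosen so that $r$ topologically generates $\bZ_p^{\times}$ modulo torsion, ensuring that $\ps^r/1$ does produce the expected image-of-$J$ splittings $p$-locally.
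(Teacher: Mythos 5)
Your plan founders on two points, one of which is fatal. First, you assume that $\ps^r$ is realized as a map of $\sL$-spectra on $kU$ so that you can form an eigen-splitting of $sl_1(kU)$. That is exactly what is \emph{not} known: the paper is explicit that there is still no direct justification that the Adams operations are $E_{\infty}$ ring maps, and the whole strategy of this part of \cite{MQR} is to route around that by working with the Frobenius $\ph^r$ on the discrete model only. So a splitting of $sl_1(kU)$ into image-of-$J$ pieces via $\ps^r/1$ \emph{as infinite loop spectra} is not available as an input; it is essentially an output of the circle of results this theorem feeds. Second, the ``rigidity'' you invoke from \cite[\S10]{Prequel1} does not exist in the form you need. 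What \S10 supplies is the identification of $\GA_1$ of the multiplicatively enriched machine output with the $1$-component of the $0^{th}$ space of the associated spectrum (used in the actual proof to replace $BU^{\de}_{\otimes}$ by $\GA_1$ of an explicit permutative category); it does not assert that every $H$-equivalence between these infinite loop spaces is an infinite loop equivalence. The genuine rigidity inputs are the Adams--Priddy uniqueness of the infinite loop structure on $BSU$ and $BSO$ (after splitting off the $BU(1)$ and $BO(1)$ factors, which are Eilenberg--Mac\,Lane and hence harmless) together with the Madsen--Snaith--Tornehave criterion: an $H$-map between infinite loop spaces of that homotopy type is an infinite loop map if and only if it is an $H_{\infty}^p$-map, i.e.\ commutes up to homotopy with the extended power structure maps $W\times_{\pi}X^p\rtarr X$.

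Verifying that criterion for $\la_{\otimes}$ is the real content of the proof, and it is entirely absent from your proposal. After diagram chases reducing along the group completions $\et$, it comes down to showing that Brauer lifting $\be\colon BGL(m,\bar{\bF}_q)\rtarr BU$ intertwines the maps $B\tilde{c}_{\otimes}\colon B(\pi\int GL(m,\bar{\bF}_q))\rtarr BGL(m^p,\bar{\bF}_q)$ induced by $p$-fold tensor powers with their unitary counterparts. That is a representation-theoretic statement, proved using Serre's lifting results and a careful choice of the embedding $\bar{\bF}_q^{\times}\rtarr \bC^{\times}$ compatible with a decomposition isomorphism. The homotopy $\ps^r\com\la\htp\la\com\ph^r$, on which you lean, is a consequence of Brauer lifting and is nowhere near strong enough by itself to force infinite loop linearity. (A minor point: the condition that $r$ generate the units mod $p^2$ is irrelevant to this theorem, which holds for any prime power $r=q^a$ with $q\neq p$; that condition is needed only for the later image-of-$J$ applications.)
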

\begin{proof}[Sketch proof]  It is easy to prove that we have splittings of infinite 
loop spaces 
\[
BU_{\otimes}\htp BU(1)\times BSU_{\otimes} \ \ \ \text{and} \ \ \
BO_{\otimes}\htp BO(1)\times BSO_{\otimes} 
\]
\[BU^{\de}_{\otimes}\htp BU(1)\times BSU^{\de}_{\otimes} \ \ \ \text{and} \ \ \
BO^{\de}_{\otimes}\htp BO(1)\times BSO^{\de}_{\otimes};
\]
see \cite[V.3.1, VII.2.10]{MQR}. Here $BSU^{\de}_{\otimes}$ is the $3$-connected cover of $BU^{\de}_{\otimes}$ and $BSO^{\delta}_{\otimes}$ 
is the $1$-connected cover of $BO^{\delta}_{\otimes}$.  Thinking topologically, the idea is to think of 
$BU(1)\htp K(\bZ,2)$ and $BO(1)\htp K(\bZ/2,1)$  as representing the functors giving Picard groups 
of complex or real line bundles, but the proof is homotopical. The equivalences $\la_{\otimes}$ respect the splittings, and the resulting $H$-equivalences of Eilenberg--Mac\,Lane spaces 
are clearly equivalences of infinite loop spaces.  Thus it suffices to prove the result with $U$ and $O$ replaced 
by $SU$ and $SO$ in the statement.

By a result of Adams and Priddy \cite{AP}, $BSU$ and $BSO$ have unique infinite loop structures.
By a result of Madsen, Snaith, and Tornehave \cite{MST}, if $X$ and $Y$ are infinite loop spaces both 
homotopy equivalent to $BSU$ or to $BSO$, then an $H$-map $f\colon X\rtarr Y$ is an infinite loop 
map if and only if it ``commutes with transfers'' or, in the language of \cite[VIII.1.3, 1.5]{MQR}, 
is an ``$H_{\infty}^p$--map''.  Letting $W \htp \sC(p)$, the infinite loop structure gives the maps $\tha$ 
in the following diagram, and $f$ is an $H_{\infty}^p$--map if the diagram is homotopy commutative.
Here $\pi$ is the cyclic group of order $p$.
\[ \xymatrix{
W\times_{\pi} X^p \ar[r]^-{\text{id}\times f^p} \ar[d]_{\tha} & W\times_{\pi} Y^p \ar[d]^{\tha} \\
X \ar[r]_-{f}  & Y \\} \]

From here on, the argument is the same in the two cases and we focus on the complex case.  
The spaces of (\ref{Ude}) are constructed from the infinite loop space machine,
viewed as a multiplicative enrichment of the additive infinite loop space machine.
Let $M\subset \bZ_+$ be the monoid of integers prime to $p$.
We have permutative categories $(\amalg_{m\in M} GL(m,\bar{F}_q),\otimes)$ and
$(\amalg_{m\in M}U(m),\otimes)$.  Let $X$ and $Y$ denote their classifying spaces.  
We can apply the infinite loop space machine to $X$ and $Y$ to obtain $\GA X$ and $\GA Y$, 
and we have the group completions $\et\colon X\rtarr \GA X$ and $\et\colon Y\rtarr \GA Y$.
By \cite[10.1]{Prequel1}, we have equivalences of infinite loop spaces 
\[ \io\colon \GA_1X \rtarr  \GA_1 B\sG\sL\bar{F}_q \ \ \ \text{and} \ \ \  
\io\colon \GA_1Y \rtarr  \GA_1 B\sU.  \] 
It suffices to prove that $\la_{\otimes} \com \io$ is an infinite loop map.  For that, it suffices to show 
that $\la_{\otimes}\com \io$ is an $H_{\infty}^p$-map since its restriction to 
$3$-connected covers will then also be an $H_{\infty}^p$-map.  The equivalences $\io$ 
extend over components to equivalences 
\[ \io\colon \GA X \rtarr  \GA B\sG\sL\bar{F}_q \ \ \ \text{and} \ \ \  
\io\colon \GA Y \rtarr  \GA B\sU.  \] 
By interpreting the restriction of the relevant maps $\tha$ along the group completion 
maps $\et$ and chasing a fairly elaborate but elementary diagram involving change of components
\cite[VIII.1.2, 1.4]{MQR}, we find that it suffices to prove that the following diagram is 
homotopy commutative for each $m\in M$.
\[ \xymatrix{
W\times_{\pi} BGL(m,\bar{F}_q)^p\ar[r]^-{\htp} \ar[d]_{\text{id}\times \be^p}
& B(\pi\int GL(m,\bar{F}_q))  \ar[r]^-{B\tilde{c}_{\otimes}} 
& BGL(m^p,\bar{F}_q) \ar[d]^{\be}\\
W\times_{\pi} (BU\times\{m\})^p \ar[r]_-{\htp} 
& B(\pi\int U^p)\times\{m^p\} \ar[r]_-{B\tilde{c}_{\otimes}} & BU\times\{m^p\}.\\} \]
Here the homomorphisms $\tilde{c}_{\otimes}$ are induced by the tensor product and commutativity isomorphisms
of our two permutative categories.   The maps $\be$ are given by Brauer lifting of representations,
and the argument so far reduces the question to an algebraic problem in representation theory.  Its 
solution requires careful use of various standard results from Serre \cite{Serre} that allow us to lift
relevant representations in finite fields to honest rather than virtual complex representations.  The map
$\be$ involves a choice of embedding $\mu\colon \bar{\bF}_q^{\times} \rtarr \bC^{\times}$ of roots of unity 
in the complex numbers, and the proof depends on making a particularly good choice, consistent with a certain
decomposition isomorphism; details are in \cite[pp. 220--222]{MQR}. 
\end{proof}

\section{The $K$-theory of finite fields and orientation theory}\label{last}

We return to the discussion of infinite loop space theory and orientation theory that 
we started in \S\ref{units}.  We describe some of the results that provided the
original motivation for the theory of $E_{\infty}$ ring spaces and $E_{\infty}$ ring
spectra.  Much of the work of \cite{MQR} focused on three large diagrams
\cite[pp 107, 125, 229]{MQR}.\footnote{It would be nice to have these diagrams readably texed; I haven't
tried.  Another advertisement: \cite{MQR} and related early books have been scanned and are available online at
http://www.math.uchicago.edu/~may/BOOKSMaster.html.}  We will extract some of the 
conclusions about them, highlighting the role of $E_{\infty}$ ring theory.

Again completing all spaces and spectra at a fixed prime $p$, we now take $r=3$ if $p=2$ 
and we assume that $r = q^a$ reduces mod $p^2$ to a generator of the group of units of 
$\bZ/p^2$ if $p$ is odd.  We abbreviate notation by writing $BC = BCoker\, J$ and $C= \OM BC$. 
Since $BSpin \htp BSO\htp BO$ at $p>2$, the definition of $BC$ in \S\ref{units} 
can be restated by letting $BC$ be the fiber of $c(\ps^r)\colon B(SF;kO)\rtarr BSpin_{\otimes}$ 
at any prime $p$. Similarly, define $J$ to be the fiber of $\ps^r-1\colon BO\rtarr BSpin$ at $p$.
When $p=2$, this is the most convenient (for the present purposes) of the several choices that can be made.

The $J$-theory diagram of \cite[p. 107]{MQR} implies a slew of
splittings of spaces of geometric interest, such as
$SF\htp J\times C$ and $B(SF;kO)\htp BSpin \times BC$.  The initial applications 
of $E_{\infty}$ ring theory were aimed at proving (or disproving) that these are 
splittings of infinite loop spaces.  Since our calculational understanding of the 
spaces in question depends on their Dyer-Lashof operations, which are invariants 
of the infinite loop structure, this analysis is essential to calculations.

To start things off, observe that the theory of \S\ref{units} and the equivalence 
$\la$ of (\ref{Brspectrum}) directly give the following two equivalences 
of fibration sequences, in which $BC^{\de}$ is defined to be the fiber of $c(\phi^r)$. 

\[ \xymatrix{
SF \ar@{=}[d]  \ar[r]^-{e} & BO^{\de}_{\ten} \ar[d]^{\la_{\ten}}_{\simeq} \ar[r]^-t
& B(SF;kO^{\de}) \ar[d]^{B\la}_{\simeq} \ar[r]^-{q} & BSF \ar@{=}[d]  \\
SF \ar[r]_-{e} & BO_{\ten} \ar[r]_-t & B(SF;kO) \ar[r]_-{q} & BSF. \\}
\]

All spaces in this diagram are infinite loop spaces.  The left square turns out to be a 
commutative diagram of infinite loop spaces \cite[VIII.3.4]{MQR}.  Therefore, by standard 
arguments with 
fibration sequences of spectra, we can take $B\la$ to be an infinite loop map such that
the diagram is a commutative diagram of infinite loop spaces.  Of course, this would have 
been automatic if we knew that $\la$ were a map of $E_{\infty}$ ring spectra. 

\[ \xymatrix{
Spin_{\otimes}^{\de} \ar[r] \ar[d]_{\OM \la_{\otimes}}^{\htp} & BC^{\de} \ar[r] \ar[d]^{\mu}_{\htp}
&B(SF;kO^{\de}) \ar[r]^-{c(\ph^r)} \ar[d]^{B\la}_{\htp} & BSpin^{\de}_{\otimes} \ar[d]^{\la_{\otimes}}_{\htp}\\
Spin_{\otimes} \ar[r] & BC \ar[r] 
& B(SF;kO) \ar[r]^-{c(\ps^r)} & BSpin_{\otimes} \\} \]

In this diagram, we do not know that $\ps^r$ is an $E_{\infty}$ ring map, so
we do not know that $BC$ is an infinite loop space.  Since $\phi^r$ is an $E_{\infty}$
ring map, $c(\ph^r)$ is an infinite loop map and $BC^{\de}$ inherits an infinite loop
structure such that the top fibration is one of infinite loop spaces.  The equivalence
$\mu$ is any map such that the diagram commutes, and we may regard it as specifying
a structure of infinite loop space on $BC$.  This allows us to regard the bottom
fibration as one of infinite loop spaces. 

There is a more illuminating description of $BC^{\de}$ that comes from further discrete models.
On the spectrum level, define $bo$, $bso$, and $bspin$ to be the covers of $kO$ with 
$0^{th}$ spaces $BO$, $BSO$, and $BSpin$. These are all the same if $p>2$.  
Define $\kappa\colon j\rtarr ko$ to be the fiber of $\ps^3-1\colon ko\rtarr bspin$.  
Then the zero component of the $0^{th}$ space of $j$ is $J$.  These spaces and spectra all
have discrete models, as proven in \cite[VIII\S3]{MQR}.  The essential starting point is Quillen's 
work on the $K$-theory of finite fields \cite{Quill2}, which shows in particular that, at $p>2$, 
$J$ is equivalent to the fiber $J^{\de}$ of the map $\ph^r-1\colon BU^{\de} \rtarr BU^{\de}$.  
Work of Fiedorowicz and Priddy \cite{FP} also plays a role in the following result.
Recall Example \ref{Ex7}. 

\begin{defn} Define the following spaces and spectra.
\begin{enumerate}[(i)]
\item At $p=2$, $j^{\de} = \bE B\sN\bF_3$; at $p>2$, $j^{\de} = K\bF_r$.  These are $E_{\infty}$ ring spectra.
\item $J^{\de}_{\oplus}$ and $J^{\de}_{\otimes}$ are the $0$ and $1$ components of the $0^{th}$ space of $j^{\de}$.  These are additive and multiplicative infinite loop spaces. 
\end{enumerate}
\end{defn}

The Brauer lift $\la$ of (\ref{Brspectrum}) and comparison of $\ps^r-1$ and $\ph^r-1$ 
leads to the following result \cite[VIII.3.2]{MQR}, although some intermediate comparisons 
and some minor calculations are needed for the proof. 

\begin{thm}\label{Brnew}  There is an equivalence of spectra 
$\nu$ and a commutative diagram 
\[ \xymatrix{
j^{\de}\ar[r]^-{\ka^{\de}} \ar[d]_{\nu} & ko^{\de} \ar[d]^{\la} \\
j\ar[r]_-{\ka} & kO \\} \]
in which $\ka^{\de}$ is induced by a map of bipermutative categories and $\ph^r\com \ka^{\de} = \ka^{\de}$.
\end{thm}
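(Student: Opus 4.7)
The plan is to construct the diagram from the left, starting with the map of bipermutative categories that yields $\ka^{\de}$. At $p=2$ the relevant data is the chain of subbipermutative inclusions $\sN\bF_3 \subset \sO\bF_3 \subset \sO\bar{\bF}_3$ from Example \ref{Ex7}; applying $\bE B$ gives $\ka^{\de}\colon j^{\de}\rtarr ko^{\de}$ as a map of $E_{\infty}$ ring spectra. At odd $p$ one uses the parallel construction based on the inclusion of the fixed subcategory $\sG\sL\bF_r \subset \sG\sL\bar{\bF}_q$ of Example \ref{Ex5}, followed by the identification of the general linear and orthogonal models available at odd primes. Because $\bF_r$ is the fixed field of $\ph^r$ acting on $\bar{\bF}_q$, in both cases $\ka^{\de}$ factors through the $\ph^r$-fixed subcategory, and functoriality of $\bE B$ therefore yields $\ph^r\com \ka^{\de} = \ka^{\de}$ on the nose.

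Next I would produce $\nu$ by comparing homotopy fibers. The compatibility $\ps^r\com \la\htp \la\com \ph^r$ recorded just after (\ref{Brspectrum}), from \cite[VIII.2.9]{MQR}, shows that $\la$ carries the homotopy fiber of $\ph^r - 1$ on $ko^{\de}$ to the homotopy fiber of $\ps^r - 1$ on $ko$, which is $j$ by the definition of $\ka$. To identify the source fiber with $j^{\de}$, I would invoke Quillen's theorem on the $K$-theory of finite fields \cite{Quill2}, which at odd $p$ identifies the homotopy fiber of $\ph^r - 1$ on $kU^{\de}$ (and hence on $ko^{\de}$) with $K\bF_r = j^{\de}$; at $p=2$ the work of Fiedorowicz-Priddy \cite{FP} performs the analogous identification for $\bE B\sN\bF_3$, with the spinor-norm twist built into the definition of $\sN\bF_3$ in Example \ref{Ex7} accounting for the passage to the $2$-primary image-of-$J$ spectrum. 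The map $\nu$ is then the induced map on fibers, and the square commutes in the homotopy category of spectra by construction; that $\nu$ is an equivalence follows because $\la$ and both fiber identifications are.

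The main obstacle lies in this second step. Since $\la$ is only known as a ring map up to homotopy, not as a map of $E_{\infty}$ ring spectra, the desired diagram cannot be obtained as $\bE B$ of a square of bipermutative categories; instead one must patch together fibration sequences in the stable homotopy category and appeal to the five-lemma on homotopy groups. One must also verify that the two candidate descriptions of $\nu$---the one arising from the categorical inclusion into the $\ph^r$-fixed subcategory and the one coming from the universal property of the homotopy fiber of $\ps^r - 1$---agree in the homotopy category, which reduces to a small component-matching calculation via the Brauer-lift analysis of \S\ref{Brauer}.
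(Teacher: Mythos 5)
Your overall strategy is the one the paper (following \cite[VIII.3.2]{MQR}) intends: build $\ka^{\de}$ strictly from a functor of bipermutative categories landing in the Frobenius-fixed subcategory, use $\ps^r\com \la\htp \la\com\ph^r$ to conclude that $(\ps^r-1)\com\la\com\ka^{\de}$ is null and hence that $\la\com\ka^{\de}$ lifts to the fiber $j$, and then check that the lift $\nu$ is an equivalence via Quillen's identification of $K\bF_r$ with the fiber of $\ph^r-1$ (and Fiedorowicz--Priddy at $p=2$). The $p=2$ branch is correct as you state it.

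The gap is in the odd-primary construction of $\ka^{\de}$. The inclusion $\sG\sL\bF_r\subset \sG\sL\bar{\bF}_q$ gives a strict, Frobenius-fixed map $j^{\de}=K\bF_r\rtarr kU^{\de}=\bE B\sG\sL\bar{\bF}_q$, but the theorem asks for a map into $ko^{\de}=\bE B\sO\bar{\bF}_q$, and there is no ``identification of the general linear and orthogonal models'': $ko$ is only a proper retract of $ku$ at odd $p$ (for instance $\pi_2 ku=\bZ$ while $\pi_2 ko=0$ after completion at an odd prime), and the retraction is a stable-homotopy-theoretic construction, not a functor of bipermutative categories. Composing your categorical map with it destroys exactly the two assertions of the theorem that carry all the weight later: that $\ka^{\de}$ is induced by a map of bipermutative categories, and that $\ph^r\com\ka^{\de}=\ka^{\de}$ holds on the nose rather than up to homotopy (this strict equality is what makes the restriction of $c(\ph^r)$ to $B(SF;j^{\de})$ trivial as an infinite loop map in the corollary that follows). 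The repair is to use the inclusion $\sO\bF_r\subset\sO\bar{\bF}_q$, which is a strict map of bipermutative categories fixed elementwise by $\ph^r$, together with the identification of $\bE B\sO\bF_r$ with $K\bF_r$ at odd $p$ coming from the computations of \cite{Quill2, FP}. Two smaller points: $j$ is the fiber of $\ps^r-1\colon ko\rtarr bspin$, not of $\ps^r-1$ on $ko$ itself, and the distinction matters for identifying $\pi_*\nu$ in degrees $\leq 2$ at $p=2$, where $\ps^3$ fixes $\pi_1 ko=\pi_2 ko=\bZ/2$; and ``both fiber identifications are equivalences'' needs Quillen's theorem in its strong form --- the map from $K\bF_r$ to the fiber induced by the categorical inclusion is itself an equivalence --- which is also what lets the two descriptions of $\nu$ in your last paragraph be compared.
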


The last statement implies that the restriction of $c(\ph^r)\colon B(SF;kO^{\de})\rtarr Spin_{\otimes}^{\de}$
to the space $B(SF;j^{\de})$ is the trivial infinite loop map. There results an infinite loop map 
$\xi^{\de}\colon B(SF;j^{\de})\rtarr BC^{\de}$.  Since the analogous map $\xi\colon B(SF;j)\rtarr BC$ is an
equivalence \cite[V.5.17]{MQR}, we can deduce that $\xi^{\de}$ is so too.

\begin{cor} The infinite loop map $\xi^{\de}\colon B(SF;j^{\de})\rtarr BC^{\de}$ is an equivalence.
\end{cor}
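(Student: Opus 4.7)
The plan is to compare $\xi^{\de}$ with its non-discrete analogue $\xi\colon B(SF;j)\rtarr BC$ via the Brauer lift of Theorem \ref{Brnew}, assembling a square of infinite loop maps three of whose edges are already known to be equivalences, and then invoking two-out-of-three.

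First I would exploit the equivalence $\nu\colon j^{\de}\rtarr j$ supplied by Theorem \ref{Brnew}. It induces an equivalence of unit $\sL$-spaces $SL_1 j^{\de}\rtarr SL_1 j$ compatible with the right actions of $SF$, and functoriality of the two-sided bar construction as an $\sL$-space (Proposition \ref{BYGXinf}) then yields an equivalence of infinite loop spaces $B(SF;\nu)\colon B(SF;j^{\de})\rtarr B(SF;j)$.

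Next I would assemble the commutative square
\[
\xymatrix{
B(SF;j^{\de}) \ar[r]^{B(SF;\nu)} \ar[d]_{\xi^{\de}} & B(SF;j) \ar[d]^{\xi} \\
BC^{\de} \ar[r]_{\mu} & BC
}
\]
and verify its commutativity as a diagram of infinite loop spaces. By construction, $\xi^{\de}$ is the lift of $B(SF;\ka^{\de})\colon B(SF;j^{\de})\rtarr B(SF;kO^{\de})$ through the fiber $BC^{\de}$ of $c(\ph^r)$, a lift made possible by the identity $\ph^r\com \ka^{\de}=\ka^{\de}$ in Theorem \ref{Brnew}; similarly, $\xi$ is the lift of $B(SF;\ka)$ through the fiber $BC$ of $c(\ps^r)$. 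The compatibility $\la\com \ka^{\de}=\ka\com \nu$ from Theorem \ref{Brnew}, together with functoriality of the bar construction in the $Y$-variable, makes the larger square involving $B\la$ and $B(SF;\nu)$ commute, and passage to fibers gives the displayed square, with $\mu$ on the bottom precisely by its definition in the diagram preceding Theorem \ref{Brnew}. Since the top horizontal arrow is an equivalence by the first step, $\mu$ is an equivalence by construction, and $\xi$ is an equivalence by \cite[V.5.17]{MQR}, two-out-of-three forces $\xi^{\de}$ to be an equivalence as well.

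The main obstacle is keeping track of infinite loop (equivalently, $\sL$-map) structure rather than just homotopy classes throughout. In particular, because $\ps^r$ is not known to be a map of $E_{\infty}$ ring spectra, the infinite loop structure on the bottom fibration is transported from the top via $\mu$ (as explained in the text preceding Theorem \ref{Brnew}); one must check that the comparison square of this plan commutes compatibly with that transport, which is why it is essential that the relevant Brauer-lift data in Theorem \ref{Brnew} provides genuine commutative diagrams of $\sL$-spectra, not merely a diagram up to homotopy.
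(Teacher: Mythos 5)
Your plan is correct and is essentially the paper's own (one-line) argument made explicit: compare $\xi^{\de}$ with the topological $\xi$ of \cite[V.5.17]{MQR} through the Brauer-lift square coming from Theorem \ref{Brnew}, and conclude by two-out-of-three. One simplification worth noting: since being an equivalence is detected on underlying spaces, the comparison square need only commute up to homotopy of spaces --- which is just as well, because $j$ (unlike $j^{\de}$) is not known to be an $E_{\infty}$ ring spectrum, so $B(SF;j)$ and $BC$ carry no a priori infinite loop structure for your square to respect; their infinite loop structures are precisely what gets transported from the discrete models via $\mu$ and $\xi^{\de}$.
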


At this point, one can put together a braid of topologically defined fibrations of interest, together with
an equivalence from a corresponding braid of discrete models that makes the whole diagram one of infinite
loop spaces \cite[VIII.3.4]{MQR}.  The braid focuses attention on the fibration sequence 
\[ \xymatrix@1{
SF \ar[r]^-{e} & J^{\de}_{\otimes} \ar[r]^-{t} & B(SF;j^{\de})\ar[r]^-{q} & BSF.\\} \]
Ignoring infinite loop structures, one sees from the homotopical splitting of $SF$ that $t$ is
null homotopic.  At $p=2$, it is not even true that  $SF\htp J\times C$ as $H$-spaces \cite[II.12.2]{CLM}, 
but, at $p>2$, $SF\htp J\times C$ as infinite loop spaces, as we now explain.  Let $M\subset \bZ_+$
be the submonoid of integers $r^n$ and let $\sE_M = \amalg_{m\in M} \SI_m$.   We use the exponential
unit map $e_r = f\com e\colon (\sE,\oplus) \rtarr (\sE_M,\otimes)$ of permutative categories 
described in Examples \ref{Ex4} and \ref{Ex5}.  The forgetful functor $f$ induces an infinite
loop map $J^{\de} = \GA_0B\sG\sL\bF_r\rtarr \GA_1(B\sE_M,\otimes)$.  By another application of
\cite[10.1]{Prequel1}, the target is equivalent
(away from $r$ and therefore at $p$) to $\GA_1(B\sE,\oplus)\htp SF$.  
Let $\al^{\de}\colon J^{\de}\rtarr SF$ be the resulting infinite loop map.  We have the commutative diagram
\[  \xymatrix{
Q_0 S^0 \ar[r]^-{\al^{\de}\com e} \ar[d]_{e} & SF \ar[d]_{e}\\
J^{\de} \ar[r]_-{e\com \al^{\de}} \ar[ur]^{\al^{\de}} & J^{\de}_{\otimes}, \\} \]
where the left and right vertical arrows $e$ are the restrictions to the components of
$0$ and $1$ of the map on $0^{th}$ spaces of the unit map $e\colon S\rtarr j^{\de}$. 
This works equally well at $p=2$, but at odd primes $p$ a direct homological calculation
using Quillen's calculation of $H_*(J^{\de};\bF_p)$ and analysis of Dyer--Lashof
operations gives the following exponential equivalence \cite[VIII.4.1]{MQR}, as
promised in the introduction. 

\begin{thm} At $p>2$, the composite $J^{\de} \overto{\al^{\de}} SF \overto{e} J^{\de}_{\otimes}$
is an equivalence. 
\end{thm}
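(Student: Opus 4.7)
The plan is to reduce to a computation in mod $p$ homology, where Quillen's calculation of $H_*(J^{\de};\bF_p)$ combined with the distributivity relating additive and multiplicative Dyer--Lashof operations on $B\sG\sL\bF_r$ does the real work. Both $J^{\de}$ and $J^{\de}_{\otimes}$ are $p$-complete simply connected infinite loop space components of $j^{\de}=K\bF_r$ (the relevant $\pi_1 = \bF_r^{\times}$ is coprime to $p$ since $r$ is a primitive root mod $p^2$), so it suffices to verify that $(e\com \al^{\de})_*$ is an isomorphism on $H_*(-;\bF_p)$. Moreover, translation by any class in $\pi_0(j^{\de})=\bZ$ sending $0$ to $1$ identifies $H_*(J^{\de};\bF_p)$ and $H_*(J^{\de}_{\otimes};\bF_p)$ abstractly, and Quillen \cite{Quill2} has computed this common algebra as a free graded-commutative algebra generated by iterated additive Dyer--Lashof operations on a fundamental class in degree $2p-3$.

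I would next identify the map on homology. By construction, $\al^{\de}$ is induced by the forgetful functor $f\colon \sG\sL\bF_r\rtarr \sE_M$ of Example \ref{Ex6}, which is \emph{exponential}: $f(A\oplus B) = f(A)\ten f(B)$. Composition with $e\colon SF\rtarr J^{\de}_{\otimes}$ merely absorbs the restriction $\sE_M\subset \sE$ up to the equivalence $\GA_1(B\sE_M,\otimes)\htp \GA_1(B\sE,\oplus)$ away from $r$. Thus $e\com \al^{\de}$ intertwines the additive and multiplicative $E_\infty$ structures on $B\sG\sL\bF_r$, and via the distributivity law of its $(\sC,\sL)$-structure it converts additive Dyer--Lashof operations on the source to multiplicative Dyer--Lashof operations on the target, modulo terms governed by the mixed Cartan formulas.

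Concretely, tracking the fundamental class $x\in H_{2p-3}(J^{\de};\bF_p)$, I would argue that each iterated additive Dyer--Lashof class $Q^I x$ is sent to the corresponding iterated multiplicative Dyer--Lashof class $\tilde Q^I e(x)$ plus decomposables in the multiplicative algebra structure. Because Quillen's generating system on the target, translated back to the $1$-component, matches the generating system on the source in exactly this pattern, the induced map of indecomposable quotients is an isomorphism; a Poincar\'e series comparison then upgrades this to an isomorphism on all of $H_*(-;\bF_p)$, and the theorem follows.

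The main obstacle is the bookkeeping in the third step: keeping straight how additive Dyer--Lashof classes on $J^{\de}$, once moved to the $1$-component by translation, relate to multiplicative Dyer--Lashof classes on $J^{\de}_{\otimes}$ is precisely what the mixed Cartan and mixed Adem relations encode, and is exactly where the odd prime hypothesis (and the choice of $r$ as a primitive root mod $p^2$) enters essentially. At $p=2$ the analogous composite is not even an $H$-map equivalence by \cite[II.12.2]{CLM}, which shows that no purely formal argument can replace this calculation.
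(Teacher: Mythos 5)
Your proposal follows essentially the same route as the paper, which gives no details itself but describes the proof of \cite[VIII.4.1]{MQR} as exactly this: a direct mod $p$ homology calculation resting on Quillen's computation of $H_*(J^{\de};\bF_p)$ together with an analysis of how additive Dyer--Lashof operations convert to multiplicative ones under the exponential map induced by the forgetful functor. The reduction to an $\bF_p$-homology isomorphism of $p$-complete simple spaces and the matching of generators via the distributivity structure are the same ideas; the bookkeeping you defer is precisely the content of the cited calculation.
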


As observed in \cite[pp 240--241]{MQR} this implies the following result.

\begin{cor} At $p>2$, there are equivalences of infinite loop spaces
\[ J\times C\htp SF, \ \ \text \ BJ\times BC\htp BSF, \ \ \text{and}\ \ B(SF;kO)\htp BO_{\otimes} \times BC. \]
\end{cor}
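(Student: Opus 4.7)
The plan is to deduce the three equivalences from the theorem, using the earlier identifications $J^{\de}\htp J$ at $p>2$ (from Quillen's computation of the $K$-theory of finite fields) and $\xi^{\de}\colon B(SF;j^{\de})\htp BC$. First I would establish $SF\htp J\times C$. Apply the stable orientation diagram (Theorem~\ref{ordiag}) with $R=j^{\de}$ and $G=F$ to obtain the fibration of $\sL$-spaces
\[ SF\overto{e} J^{\de}_{\otimes}\overto{t} B(SF;j^{\de})\overto{q} BSF. \]
The theorem asserts that $e\com\al^{\de}\colon J^{\de}\rtarr J^{\de}_{\otimes}$ is an equivalence of infinite loop spaces, so $\al^{\de}$ provides an infinite loop section of the map $SF\rtarr J^{\de}$ obtained by inverting $e\com\al^{\de}$. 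The fibration therefore splits as infinite loop spaces, exhibiting $SF$ as $J^{\de}\times \OM B(SF;j^{\de})\htp J\times C$. Delooping this splitting immediately yields the second equivalence $BSF\htp BJ\times BC$.

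For $B(SF;kO)\htp BO_{\otimes}\times BC$, I would use the composite $j^{\de}\overto{\ka^{\de}} ko^{\de}\overto{\la} kO$ of $E_{\infty}$ ring spectrum maps (see Theorem~\ref{Brnew}), which by naturality of the stable orientation diagram induces a map of fibration sequences of infinite loop spaces
\[
\xymatrix@C=14pt{
SF \ar@{=}[d]\ar[r]^-{e_j} & J^{\de}_{\otimes}\ar[d]\ar[r] & B(SF;j^{\de})\ar[d]^{\phi}\ar[r] & BSF\ar@{=}[d]\\
SF \ar[r]_-{e_k} & BO_{\otimes}\ar[r]_-{t_k} & B(SF;kO)\ar[r]_-{q_k} & BSF.
}
\]
Setting $\ps=\phi\com(\xi^{\de})^{-1}\colon BC\rtarr B(SF;kO)$ and combining $t_k$ with $\ps$ via the infinite loop addition on $B(SF;kO)$ produces the candidate $\mu=t_k+\ps\colon BO_{\otimes}\times BC\rtarr B(SF;kO)$, which is an infinite loop map since each summand is.

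The hard part is verifying that $\mu$ is an equivalence. I would work with the long exact sequence of the fibration $BO_{\otimes}\rtarr B(SF;kO)\rtarr BSF$. Its connecting homomorphism agrees on $\pi_*$ with the unit $e_k\colon SF\rtarr BO_{\otimes}$, which vanishes at $p>2$ for degree reasons: in positive degrees $\pi_*SF$ is all torsion while $\pi_*BO_{\otimes}\iso\pi_*BO$ is $\bZ_{(p)}$ concentrated in degrees $4k$. The LES therefore breaks into short exact extensions, and a term-by-term comparison with the trivial extension arising from $BO_{\otimes}\times BC\rtarr BC$ --- exploiting the identification $BSF\htp BJ\times BC$ from the first part of the proof and the commutativity of the diagram above, which identifies $q_k\com\ps$ with the inclusion of the $BC$ summand of $BSF$ --- shows that $\mu_*$ is an isomorphism on every homotopy group. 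Since both source and target are simply connected $p$-complete infinite loop spaces, this suffices for $\mu$ to be an equivalence of infinite loop spaces.
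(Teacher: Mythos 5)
Your treatment of the first two equivalences is correct and is essentially the argument the paper intends: the exponential equivalence gives an infinite loop section of $e\colon SF\rtarr J^{\de}_{\otimes}$, the fibration $\OM B(SF;j^{\de})\rtarr SF\overto{e} J^{\de}_{\otimes}$ splits, and one deloops.

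The third equivalence, however, is not proved by your map $\mu = t_k+\ps$, and in fact that map is \emph{not} an equivalence. The problem is that $t_k\colon BO_{\otimes}\rtarr B(SF;kO)$ is not the inclusion of the $BO_{\otimes}$ factor in the splitting. Your own setup shows why: since $\pi_*(SF)$ is finite in positive degrees and $\pi_*(BO_{\otimes})$ is torsion free, the connecting map vanishes and $q_{k*}\colon \pi_n B(SF;kO)\rtarr \pi_n BSF$ is \emph{surjective} for all $n$. But $q_k\com t_k$ is null and $q_k\com \ps$ lands in the $BC$ summand of $BSF\htp BJ\times BC$, so the image of $q_{k*}\com\mu_*$ is only $\pi_n(BC)\subset \pi_n(BSF)$. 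Hence $\mu_*$ misses a subgroup mapping onto $\pi_n(BJ)\iso(\mathrm{Im}\,J)_{n-1}$ whenever that group is nonzero. Concretely, at $p=3$ and $n=4$ the sequence $0\rtarr\pi_4(BO_{\otimes})\rtarr\pi_4(B(SF;kO))\rtarr\pi_3^s\rtarr 0$ is the nonsplit extension $0\rtarr\bZ_3\overto{3}\bZ_3\rtarr\bZ/3\rtarr 0$, and your $\mu_*$ is multiplication by $3$ on $\bZ_3$. Your ``term-by-term comparison'' fails precisely because the right-hand vertical map of the comparison of short exact sequences is the inclusion $\pi_n(BC)\rtarr\pi_n(BSF)$, not an isomorphism, so the five lemma gives injectivity of $\mu_*$ but not surjectivity.

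The intended argument runs differently. Pass to spectra, where $B(SF;kO)$ is the $0^{th}$ space of the cofiber of $e\colon sf\rtarr sl_1(kO)=bo_{\otimes}$. By the first part, $sf\htp j\vee c$; the restriction of $e$ to $c$ is null (it factors through $e\colon sf\rtarr j^{\de}_{\otimes}$, which kills the $c$ summand), and its restriction to $j\htp j^{\de}$ is, via the exponential equivalence and $\la\com\ka^{\de}$, the fiber inclusion $j_{\otimes}\rtarr bo_{\otimes}$ of $\ps^r/1$. Since fiber sequences of spectra are cofiber sequences, the cofiber of $e$ is $bspin_{\otimes}\vee \SI c = bspin_{\otimes}\vee bc$, and $bspin_{\otimes}\htp bo_{\otimes}$ at $p>2$. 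Equivalently: $BC$ is by definition the fiber of $c(\ps^r)\colon B(SF;kO)\rtarr BSpin_{\otimes}\htp BO_{\otimes}$, and the above shows that $c(\ps^r)$ admits an infinite loop section (which is \emph{not} $t_k$, since $c(\ps^r)\com t_k = \ps^r/1$ is not an equivalence), so that fibration splits. You should replace the inclusion $t_k$ of the $BO_{\otimes}$ factor by this section.
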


The notation $\al^{\de}$ suggests that there should be a precursor $\al\colon J\rtarr SF$,
and indeed there is.  Such a map comes directly from the Adams conjecture, and, at $p>2$, it is an
infinite loop map \cite{Fried}.  Moreover, away from $2$, work of Sullivan \cite{Sull} gives a
spherical orientation of $STop$ that leads to an equivalence of fibrations of infinite loop spaces
\[ \xymatrix{
SF \ar[r]^-{t} \ar[d]_{\chi} & F/Top \ar[r]^-{q} \ar[d]^{f} & BSTop \ar[d]^{g} \ar[r]^-{Bj} & BSF \ar@{=}[d]\\
SF \ar[r]_-{e} & BO_{\otimes} \ar[r]_-{t}  & B(SF;kO) \ar[r]_-{q} & BSF. \\} \]
This reduces the calculation of mod $p$ characteristic classes for topological bundles, $p\neq 2$, to
calculation of $H^*(BC;\bF_p)$. This is accessible via Dyer-Lashof operations in homology,
as worked out in \cite[Part II]{CLM}.  The essential point is that, at an odd prime $p$, we can replace 
$kO$ and $BO_{\otimes}$ by discrete models, and that reduces the calculation to calculations in the
cohomology of finite groups.

\end{document}